\author{Julia Brandes}
\title[Linear spaces on hypersurfaces]{Linear spaces on hypersurfaces \\ over number fields}
\address{Mathematical Sciences, Chalmers Institute of Technology and University of Gothenburg, 412 96 G{\"o}teborg, Sweden}%\\
\email{brjulia@chalmers.se}
\subjclass[2010]{Primary: 14G05. Secondary: 11D72, 11E76, 11P55, 14G25.}
\keywords{Forms in many variables, linear spaces, number fields}
\def\Ddots{\mathinner{\mkern1mu\raise\p@
\vbox{\kern7\p@\hbox{.}}\mkern2mu
\raise4\p@\hbox{.}\mkern2mu\raise7\p@\hbox{.}\mkern1mu}}
\def\A{\mathbb A}
\def\C{\mathbb C}
\def\K{\mathbb K}
\def\PP{\mathbb P}
\def\Q{\mathbb Q}
\def\R{\mathbb R}
\def\T{\mathbb T}
\def\V{\mathbb V}
\def\Z{\mathbb Z}
\def\OK{\mathcal O_{\K}}
\def\B#1{\mathbf{#1}}
\def\ba{\bm{\alpha}}
\def\bb{\bm{\beta}}
\def\bg{\bm{\gamma}}
\def\F#1{\mathfrak{#1}}
\def\cal#1{\mathcal{#1}}
\def\D{\,\mathrm{d}}
\def\mmod#1{\;(\mathrm{mod}\;{#1})}
\def\ol#1{\overline{\B{#1}}}
\def\ba{\bm \alpha}
\def\bb{\bm \beta}
\def\bg{\bm \gamma}
\def\U#1{\underline{#1}}
\def\eps{\varepsilon}
\renewcommand\le{\leqslant}
\renewcommand\ge{\geqslant}
\DeclareMathOperator{\Tr}{Tr}
\DeclareMathOperator{\Nm}{Nm}
\DeclareMathOperator{\rk}{rank}
\DeclareMathOperator{\card}{Card}
\DeclareMathOperator{\vol}{vol}
\DeclareMathOperator{\sing}{Sing}
\newtheorem{thm}{Theorem}[section]
\newtheorem{lem}[thm]{Lemma}
\theoremstyle{definition}
\theoremstyle{remark}
\numberwithin{equation}{section}
\newenvironment{pf}{\begin{proof}[Proof]}{\end{proof}}
\begin{document}

\begin{abstract}
    We establish an analytic Hasse principle for linear spaces of affine dimension $m$ on a complete intersection over an algebraic field extension $\K$ of $\Q$. The number of variables required to do this is no larger than what is known for the analogous problem over $\Q$. As an application we show that any smooth hypersurface over $\K$ whose dimension is large enough in terms of the degree is $\K$-unirational, provided that either the degree is odd or $\K$ is totally imaginary.
\end{abstract}

\maketitle

\section{Introduction}
    One of the main developments of recent years in the study of the circle method has been an increasing interest in generalising results that have been obtained over the rationals to more general fields with an arithmetic structure such as number fields or function fields, both in order to acquire a deeper understanding of how specific the results are to the integers or integer-like objects, and in order to be able to circumvent certain restrictions imposed by the integral setting.  Some of the major efforts in this direction are due to Skinner \cite{Sk:94,Sk:97} who established number field versions of the influential papers by Heath-Brown on rational points on non-singular cubic surfaces \cite{HB:83} and by Birch on forms in many variables \cite{Bir:61}. The former paper falls somewhat short of what had been known in the rational case, but in recent work Browning and Vishe \cite{BrVis:14} found an improved treatment so that now the number field case is almost as well understood as the rational case. Similarly, the recent paper of Browning and Heath-Brown generalising Birch's theorem to systems involving differing degrees \cite{BHB:14} has immediately been translated to the number field setting by Frei and Madritsch \cite{FM}, as has Dietmann's work on small solutions of quadratic forms \cite{diet:03} by Helfrich \cite{Hel}. In this memoir we aim to continue in this direction by providing a number field version of the author's recent work on linear spaces on hypersurfaces \cite{B:14, B:15FRF2}.

    Let $\K$ be an algebraic number field of degree $n$ over $\Q$ with ring of integers $\OK$. Let $\omega_1, \dots, \omega_n$ be an integral basis of $\OK$, then it is also a $\Q$-basis of $\K$.  Consider a box
    \begin{align*}
        \cal B = \{x  \in \K: x = \widehat x_{1}\omega_1 + \dots + \widehat x_{n} \omega_n, \quad \widehat x_{i} \in [-1,1]\}.
    \end{align*}
    For a given set of polynomials $F^{(1)},\dots, F^{(R)} \in \K[x_1, \dots, x_s]$ of degree $d$ we study the number $N_m(P)$ of $m$-tuples $\B x_1, \dots, \B x_m \in (\OK \cap P \cal B)^s$ satisfying the identities
    \begin{align}\label{sys}
        F^{(\rho)}(\B x_1 t_1 + \dots + \B x_m t_m) = 0 \quad (1 \le \rho \le R)
    \end{align}
    identically in $t_1, \dots, t_m$. Set $r = \binom{d-1+m}{d}$, and let
    \begin{align*}
        \sing^*(\B F) = \left\{ \B x \in \A_\K^s: \rk \left(\partial F^{(\rho)}(\B x)/\partial x_i \right)_{\rho, i} \le R-1 \right\}.
    \end{align*}
    As in comparable work, our methods are equally strong over number fields as they are over the rationals.

    \begin{thm}\label{thm:main}
      Let $F^{(1)},\dots, F^{(R)}$ be as above, and suppose that $m$ and $d\ge 2$ are integers and that
      \begin{align}\label{s-cond}
        s - \dim \sing^* \B F > 2^{d-1}(d-1)Rr(R+1).
      \end{align}
      Then there exists a non-negative constant $c$ and a parameter $\delta>0$ such that
      \begin{align}\label{eq:asymp}
        N_m(P) = c (P^{n})^{ms-rd} + O((P^{n})^{ms-rd-\delta}).
      \end{align}
    \end{thm}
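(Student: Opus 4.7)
The plan is to deploy the Hardy--Littlewood circle method over $\K$, combining the number-field framework of Skinner \cite{Sk:94} and Browning--Vishe \cite{BrVis:14} with the multilinear approach of the author's earlier work \cite{B:14,B:15FRF2}. The first step is to translate the problem to one over $\Z$: expanding each $F^{(\rho)}(\B x_1 t_1 + \dots + \B x_m t_m)$ as a polynomial in $t_1, \dots, t_m$ produces $Rr$ coefficient forms of degree $d$ in the coordinates of $(\B x_1, \dots, \B x_m)$ that are multilinear with respect to the decomposition of the variables into the $m$ blocks. Via the integral basis $\omega_1, \dots, \omega_n$ and a pairing $\Tr_{\K/\Q}$ against a dual basis, these $Rr$ conditions over $\K$ translate into $nRr$ conditions over $\Z$. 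Defining an exponential sum $S(\ba)$ over $(\OK \cap P\cal B)^{ms}$ that encodes these conditions, orthogonality gives
\[
  N_m(P) = \int_{\T^{nRr}} S(\ba) \D \ba,
\]
and I would split this integral into major arcs $\F M(\theta)$ around rationals of small denominator and minor arcs $\F m(\theta)$ in the standard fashion, with $\theta > 0$ to be optimised later.

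The main technical obstacle is the minor-arcs bound. Following the strategy of Birch \cite{Bir:61}, I would apply $d-1$ rounds of Weyl differencing to $|S(\ba)|^{2^{d-1}}$, reducing the estimate on $\F m(\theta)$ to a count of lattice points in a box where the gradients of certain multilinear auxiliary forms are simultaneously small. The key idea, adapted from \cite{B:14}, is that the $r$ coefficient forms arising from a single $F^{(\rho)}$ all come from the same symmetric multilinear form; when the differencing is carried out block-by-block, the $r$ directions can effectively be bundled, so that for each block the controlling rank is $R$ rather than $Rr$. This accounts for the factor $Rr(R+1)$ in \eqref{s-cond} in place of the na\"ive $Rr(Rr+1)$ that a direct application of Birch's method to the $Rr$ forms would produce. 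A geometry-of-numbers argument then relates the lattice-point count to $\dim \sing^*(\B F)$, and the hypothesis \eqref{s-cond} yields a nontrivial saving $|S(\ba)| \ll P^{nms - \sigma}$ for some $\sigma > 0$ on $\F m(\theta)$. Adapting these estimates to $\OK$ requires some care in relating boxes over $\OK$ to boxes over $\Z^{n}$ via the basis $\omega_j$, and in substituting appropriate Dirichlet-type lemmas valid in the number-field setting.

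The major-arcs contribution is more routine. On each arc one approximates $S(\ba)$ by the product of a complete exponential sum modulo the denominator and an archimedean integral. Assembling these approximations gives the main term $c \cdot (P^n)^{ms - rd}$ with $c = \F S \cdot \F J$, where $\F S$ is a singular series, expressible as a convergent Euler product over prime ideals of $\OK$, and $\F J$ is a singular integral over $\R^{nRr}$. The hypothesis \eqref{s-cond} is amply sufficient to ensure absolute convergence of both, and one verifies that $c \ge 0$. Combining with the minor-arcs estimate then yields \eqref{eq:asymp} for a suitable $\delta > 0$.
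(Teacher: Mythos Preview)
Your proposal is correct and follows essentially the same route as the paper: the circle method over $\K$ with block-wise Weyl differencing as in \cite{B:14} to obtain the Weyl-type dichotomy, the major/minor arcs split, and the factorisation of the main term as $\F S \cdot \F J$. The only point worth flagging is that, for the convergence of $\F S$ and $\F J$, the paper does not argue directly but instead bounds $q_{\U\bg}^{-ms}|S(\U\bg)|$ and $|v_1(\U\bb)|$ by choosing an auxiliary scale $Q$ so that the point sits on the boundary of the major arcs and then invoking the minor-arcs estimate there --- a trick you may want to incorporate, since it also supplies the paper's simplified treatment of the singular integral in place of the more involved argument of Frei--Madritsch.
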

    The constant $c$ has an interpretation as a product of local densities, so that Theorem~\ref{thm:main} yields an analytic Hasse principle. We also note that the case $m=1$ recovers Skinner's result \cite{Sk:94}, and for larger $m$ we save approximately one factor $r$ over what a naive application of Skinner's methods would yield, thus replicating the improvements of the author's earlier work \cite{B:14, B:15FRF2} over a naive application of Birch's theorem.
    One feature of the proof that is worth highlighting is our treatment of the singular integral. In  recent work, Frei and Madritsch \cite{FM} identified an inaccuracy in the work of Skinner \cite{Sk:97}, and proposed a corrected treatment. Unfortunately, their argument is rather involved, but we are able to give a much simplified proof of the same statement that parallels the treatment over $\Q$.

    An obvious question is under what conditions the constant $c$ is positive. This depends on the number field $\K$, but we can still state a result for a large class of fields.
    \begin{thm}\label{thm:local}
        Let $F^{(1)},\dots, F^{(R)}$ be as above, and suppose that $m$ and $d\ge 2$ are integers. Suppose further that either $d$ is odd or $\K$ is totally imaginary, and that
                $$s - \dim \sing^* \B F > 2^{d-1}(d-1)R\max\{r(R+1), d^{2^{d-1}}(R^2d^2+Rm)^{2^{d-2}}\}.$$
        Then \eqref{eq:asymp} holds with $c>0$.
    \end{thm}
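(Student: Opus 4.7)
The plan is to invoke Theorem~\ref{thm:main}, whose hypothesis is subsumed by the first alternative in the maximum, and to argue that the constant $c$ in the asymptotic formula \eqref{eq:asymp} is strictly positive. By the standard circle-method analysis underlying the proof of Theorem~\ref{thm:main}, the constant $c$ factorises as a product of a singular integral $\F J_\infty$---itself a product of local factors attached to the archimedean places of $\K$---and a singular series $\F S$, a convergent product of $\F p$-adic densities over the prime ideals of $\OK$. Positivity of $c$ therefore reduces to positivity of every local factor, which in turn reduces (via Hensel's lemma at the finite places and an implicit-function argument at the archimedean ones) to the existence of non-singular local points on the variety of $m$-dimensional linear subspaces of $V(\B F)$ over each completion of $\K$.

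For the archimedean places, the incidence variety parametrising $m$-planes on $V(\B F)$ is cut out by $Rr$ polynomial equations of degree $d$ in $ms$ variables, obtained by expanding \eqref{sys} in the auxiliary parameters $t_1, \dots, t_m$. Over a complex place, the fact that $\C$ is algebraically closed, combined with a standard dimension count involving $\dim \sing^* \B F$, yields non-singular complex solutions whenever $s - \dim \sing^* \B F$ is of order $Rr(R+1)$, which is already subsumed by the first term in the maximum of our hypothesis. Over a real place the situation is more delicate, since a form of even degree can have only the trivial real zero; however, when $d$ is odd one may construct a non-singular real $m$-plane inductively, extending a real $(\ell-1)$-plane on $V(\B F)$ to a real $\ell$-plane by producing a non-trivial real zero of an auxiliary form of odd degree in sufficiently many variables (which exists by the intermediate value theorem). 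The hypothesis "either $d$ is odd or $\K$ is totally imaginary" is precisely what is needed to ensure that at every archimedean place of $\K$ one of these two arguments applies.

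For the finite places, I would invoke a Brauer-Birch style argument. For each prime $\F p \subset \OK$, producing a non-singular local solution to the $m$-plane system reduces by Hensel's lemma to producing a non-singular solution modulo a suitable power of $\F p$, and this can be arranged by applying Birch's inductive diagonalisation procedure to the system of $Rr$ forms of degree $d$ in $ms$ variables cutting out the $m$-plane variety. The standard Brauer-Birch quantitative bound yields $\F p$-adic solubility once the number of variables minus the dimension of the singular locus of this expanded system exceeds a quantity of Brauer-tower shape, namely $2^{d-1}(d-1)R \cdot d^{2^{d-1}}(R^2d^2 + Rm)^{2^{d-2}}$. After estimating the singular locus of the $m$-plane system in terms of $\dim \sing^* \B F$---accounting for an additional contribution arising from degenerate tuples $(\B x_1, \dots, \B x_m)$---this translates into precisely the second term in the maximum appearing in the hypothesis.

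The main obstacle lies in the finite-place analysis, and in particular in the precise Brauer-type quantitative bound uniform in $\F p$. One must carefully relate the singular locus of the $Rr$-form system defining $m$-planes to $\dim \sing^* \B F$, and then carry through Birch's iterative reduction to diagonal shape with explicit control of constants independent of the residue characteristic. The nested-exponential factor $d^{2^{d-1}}(R^2d^2+Rm)^{2^{d-2}}$ is a direct artefact of this iteration and is the source of the strengthened variable requirement compared with Theorem~\ref{thm:main}.
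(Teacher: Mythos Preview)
Your overall strategy---invoke Theorem~\ref{thm:main}, factorise $c$ as a product of local densities, and show each factor is positive---is correct, and your treatment of the archimedean places (complex via algebraic closure, real via odd degree, following Schmidt) matches the paper. The differences lie at the finite places, and your route there contains a gap you yourself flag.

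You reduce positivity of $\chi_{\F p}$ to the existence of a non-singular $\F p$-adic point via Hensel, and then propose to compare $\dim \sing_m \B F$ (the singular locus of the expanded $Rr$-form system in $ms$ variables) with $\dim \sing^* \B F$. That comparison is genuinely delicate and you do not carry it out; it is also not how the paper proceeds. Instead, the paper bypasses this comparison entirely by appealing to a combinatorial lemma of Schmidt: one shows directly that $\Gamma(\F p^j) \gg |\Nm \F p|^{j(ms-\gamma_\K^{(\F p)}(R,m,d))}$, where $\gamma_\K^{(\F p)}(R,m,d)$ is the local solubility threshold for $m$-planes. Combining this with the bound $\chi_{\F p} = \lim_j |\Nm \F p|^{j(Rr-ms)}\Gamma(\F p^j)$ and the exponential-sum machinery, one finds $\chi_{\F p}\gg 1$ once the Weyl parameter $k$ exceeds $(d-1)R\,\gamma_\K^{(\F p)}(R,m,d)$. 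Since $k$ is already tied to $s-\dim\sing^*\B F$ by the condition $s-\dim\sing^*\B F>2^{d-1}k$ from the differencing argument, no singular-locus comparison is needed.

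Two further points. First, the numerical bound $\gamma_\K^{(0)}(R,m,d)\le d^{2^{d-1}}(R^2d^2+Rm)^{2^{d-2}}$ is not the raw Brauer--Birch tower but Wooley's sharpening \cite[Theorem~2.4]{W:98loc}; invoking only Brauer would give a weaker constant. Second, your account places the factor $2^{d-1}(d-1)R$ inside the local-solubility bound; in fact this factor arises from the passage $s-\dim\sing^*\B F>2^{d-1}k$ together with $k>(d-1)R\,\gamma$, not from the $p$-adic analysis itself.
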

    As we will see in \S \ref{s:local}, this follows from Theorem~\ref{thm:main} by applying results from the literature. Observe further that the first term in the maximum occurs for $d\le 3$ and large $m$, whereas for $d \ge 4$ the second term always dominates.  \medskip

    A consequence of Theorem~\ref{thm:local} concerns the question under what conditions a hypersurface is unirational. Two projective varieties are said to be birationally equivalent if they can be mapped onto one another by a rational map. Unfortunately, establishing birational equivalence for two given varieties is often difficult in practice, so for many applications one is satisfied with the weaker notion of unirational covers, which abandons the requirement that the rational map be an isomorphism on a Zariski-open subset and only requires a surjective cover. We call a projective variety $V$ unirational over $\K$ if there exists a dominant morphism from the pojective space $\PP_\K^{\dim V}$ onto $V$. It is straightforward to show that quadrics with a $\K$-point are always unirational over their ground field, and in a series of papers by Segre \cite{Segre:43}, Manin \cite[II.2]{Manin:Cub}, Colliot-Th\'el\`ene, Sansuc and Swinnerton-Dyer \cite[Remark 2.3.1]{CTSSD:87} and Koll\'ar \cite[Theorem~1.1]{kollar:02}, it has been shown that a smooth rational cubic hypersurface of dimension at least $2$ over any field $\K$ is unirational over $\K$ as soon as it contains a $\K$-point.

    For higher degrees the situation is more complicated. Following up on ideas by Morin \cite{Morin:42} and Predonzan \cite{Pred}, Paranjape and Srinivas \cite{ParSri:92} were able to show that a general complete intersection of sufficiently low degree is always unirational over its ground field. This has been taken one step further by Harris, Mazur and Pandharipande \cite{HMP:98}, who improved upon the almost-all-result of the former authors by showing that \emph{every} smooth hypersurface containing a sufficiently large $\K$-rational linear space is unirational over $\K$. Stating their result requires some notation. For $d \ge 2$ and $k \ge 0$ set
    \begin{align*}
        N(d,k) &= \begin{cases}
                    \displaystyle{\binom{k+1}{2}+3} & \mbox{if } d=2,\vspace*{2mm}  \\
                    \displaystyle{\binom{N(d-1, k)+d}{d-1} + N(d-1, k) + \binom{k+d}{d} + 2} & \mbox{for } d \ge 3,
                 \end{cases}
    \end{align*}
    and
    \begin{align*}
        L(d,k) &= \begin{cases}
                    0 & \mbox{if } d=2, \\
                    N(d-1, L(d-1)) & \mbox{if } d \ge 3.
                 \end{cases}
    \end{align*}
    Then Corollary 3.7 of \cite{HMP:98} shows that a hypersurface of degree $d$ over $\K$ is unirational over $\K$ if it contains a $\K$-rational plane of dimension $m \ge L(d)+1$. Hence as a consequence of Theorem~\ref{thm:local} we obtain the following.

    \begin{thm}\label{thm:unirat}
        Suppose that either $\K$ is a totally imaginary field extension or $d$ is odd, and let $F \in \K[x_1, \dots, x_s]$ be a non-singular homogeneous polynomial of degree $d \ge 4$, where
        \begin{align*}
            s > 2^{d-1}(d-1)(d^2 + L(d)+1)^{2^{d-2}} d^{2^{d-1}}.
        \end{align*}
        Then the hypersurface $F(\B x)=0$ is unirational over $\K$.
    \end{thm}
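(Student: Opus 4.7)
The plan is to derive Theorem~\ref{thm:unirat} as a direct consequence of Theorem~\ref{thm:local}, combined with Corollary~3.7 of \cite{HMP:98}: the latter tells us that the hypersurface $F=0$ is unirational as soon as it contains a $\K$-rational linear space of dimension $L(d)+1$, and the former will produce such a space once $s$ is sufficiently large.

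Concretely, I would invoke Theorem~\ref{thm:local} with $R=1$ and $m=L(d)+1$. Since $F$ is non-singular, $\nabla F$ vanishes only at the origin of affine space, so $\dim\sing^* F=0$. As remarked immediately after the statement of Theorem~\ref{thm:local}, for $d\ge 4$ the second term inside the maximum dominates, so the numerical hypothesis on $s$ reduces to
$$s > 2^{d-1}(d-1)\, d^{2^{d-1}}(d^2+L(d)+1)^{2^{d-2}},$$
which is precisely what is assumed in Theorem~\ref{thm:unirat}. Since either $d$ is odd or $\K$ is totally imaginary, Theorem~\ref{thm:local} then yields \eqref{eq:asymp} with $c>0$.

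In particular $N_{L(d)+1}(P)\to\infty$ as $P\to\infty$. To extract a genuine linear space of the stated dimension, I would next separate off the contribution of tuples $(\B x_1,\dots,\B x_m)$ in which the $\B x_i$ are $\K$-linearly dependent: such tuples lie on a proper subvariety of the parameter space, and a crude lattice point estimate shows their contribution is of strictly smaller order than the main term of \eqref{eq:asymp}. Hence for $P$ large enough there exist $\K$-linearly independent integral vectors $\B x_1,\dots,\B x_m$ satisfying $F(\B x_1 t_1+\dots+\B x_m t_m)\equiv 0$ in $t_1,\dots,t_m$, so their $\K$-span is a $\K$-rational linear subspace of dimension $L(d)+1$ contained in the hypersurface. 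Corollary~3.7 of \cite{HMP:98} then concludes the argument.

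The proof is thus essentially bookkeeping once Theorem~\ref{thm:local} is in hand; the only real points to verify are the dominance of the second term of the maximum for $d\ge 4$ (a routine numerical comparison, since $L(d)$ grows far more slowly than $d^{2^{d-1}}$) and the negligibility of the degenerate tuples, neither of which constitutes a genuine obstacle.
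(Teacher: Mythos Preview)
Your approach matches the paper's exactly: the paper simply states that Theorem~\ref{thm:unirat} follows as a consequence of Theorem~\ref{thm:local} together with Corollary~3.7 of \cite{HMP:98}, and you have filled in the routine verification with the choices $R=1$, $m=L(d)+1$. The additional care you take in discarding linearly dependent tuples is a detail the paper leaves implicit, but your treatment of it is correct.

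One point requires correction. In your final paragraph you justify the dominance of the second term in the maximum by asserting that ``$L(d)$ grows far more slowly than $d^{2^{d-1}}$''. This is false: as the paper itself records, $L(d)$ is roughly $d\uparrow\uparrow d$, which dwarfs $d^{2^{d-1}}$. The correct reason the second term dominates for $d\ge 4$ is that the exponent $2^{d-2}$ on $(d^2+m)$ satisfies $2^{d-2}\ge d$ once $d\ge 4$, so $(d^2+m)^{2^{d-2}}\ge (d+m)^d$, while $r(R+1)=2\binom{d-1+m}{d}\le 2(d+m)^d/d!$; the factor $d^{2^{d-1}}$ then easily absorbs the constant. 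Your conclusion is therefore right, but the stated justification should be replaced.
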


    Unfortunately, the numbers required to achieve this are very large. In fact, one can compute $L(4)=97$, $L(5) = 252694544886958321667 \approx 2.52\ldots \cdot 10^{20}$ and in general
    \begin{align*}
        L(d) \approx \underbrace{d^{d^{\Ddots^d}}}_{d \text{ times}} = d \uparrow \uparrow d.
    \end{align*}
    Accordingly, the bounds of Theorem~\ref{thm:unirat} are of size $L(d)^{2^{d-2}}$, which yields the bound $s> 265650463309824 \approx 2.65\ldots \cdot 10^{14}$ in the case $d=4$, and $s>1.62\ldots \cdot 10^{173}$ for $d=5$. One should expect that by applying ideas due to Heath-Brown \cite{HB:10} and Zahid \cite{Zahid:09} significantly sharper estimates can be obtained for these small degrees; we intend to pursue such refinements in future work.

    The author is grateful to Tim Browning for motivating this work and in particular for pointing out the application to unirationality.

\section{Notation and Setting} \label{s:not}

    Our setting over number fields demands a certain amount of notation. In our nomenclature we largely follow the works of Skinner \cite{Sk:97} and Browning and Vishe \cite{BrVis:14}. Let $n=n_1+2n_2$, where $n_1$ and $n_2$ denote the number of real resp. complex embeddings of $\K$. We denote these embeddings by $\eta_l$ with the convention that real embeddings are labelled with indices $1 \le l \le n_1$, and for $1 \le i \le n_2$ the embeddings with indices $n_1+i$ and $n_1+n_2+i$ are conjugates. Most of the time we will work over the $n$-dimensional $\R$-algebra
    \begin{align*}
        \V = \K \otimes_{\Q} \R \cong \bigoplus_{l=1}^{n_1+n_2}\K_l,
    \end{align*}
    where $\K_l$ is the completion of $\K$ with respect to $\eta_l$, so we have $\K_l = \R$ for $1 \le l \le n_1$ and $\K_l = \C$ for $n_1+1 \le l \le n_2$. Of course, $\K$ has a canonical embedding in $\V$ given by
    \begin{align*}
        \alpha \mapsto (\eta_1(\alpha), \dots, \eta_{n_1+n_2}(\alpha)),
    \end{align*}
    which allows us to identify $\K$ with its image in $\V$. By writing $\alpha^{(i)} = \eta_{i}(\alpha)$ we thus have $v = \oplus_l v^{(l)}$ for each $v \in \V$. The norm and trace on $\V$ are defined via
    \begin{align*}
       \Nm(v) &= v^{(1)} \cdot \ldots \cdot v^{(n_1)} |v^{(n_1+1)}|^2 \cdot \ldots \cdot |v^{(n_1+n_2)}|^2, \\
       \Tr(v) &= v^{(1)} + \ldots + v^{(n_1)} + 2\F R v^{(n_1+1)}+  \ldots + 2\F R v^{(n_1+n_2)}.
    \end{align*}
    Write further $\Omega(\K)$ for the set of places of $\K$, and let $\Omega_0(\K)$ and $\Omega_\infty(\K)$ denote the set of finite and infinite places, respectively.

    The image of any fractional ideal of $\OK$ takes the shape of a lattice in $\V$ as follows. If $\{\omega_1, \dots, \omega_n\}$ forms a $\Z$-basis of $\OK$, then it is  also an $\R$-basis of $\V$ and we have
    \begin{align}\label{eq:basis}
        \V = \{x= \widehat x_1 \omega_1 + \dots + \widehat x_n \omega_n: \; \widehat x_i \in \R \text{ for all }1 \le i \le n\}.
    \end{align}
    We further write
    \begin{align*}
        \OK^+ = \{x= \widehat x_1 \omega_1 + \dots + \widehat x_n \omega_n \in \OK: \widehat x_i \ge 0 \text{ for all }1 \le i \le n\}.
    \end{align*}
    In the interest of maintaining a consistent notation, we will denote elements in $\K$ by lower case letters, and denote the respective vector in $\R^n$ by hats, so that for $x \in \K$ we have
    \begin{align*}
        x = \bigoplus_{l=1}^{n_1+n_2} x^{(l)} = \widehat x_1 \omega_1 + \dots + \widehat x_n \omega_n, \qquad  \widehat{\B x} = (\widehat x_1, \dots, \widehat x_n).
    \end{align*}
    The analogue of the unit interval for the field $\K$ is given by the set
    \begin{align*}
        \T = \{x \in \V: 0 \le \widehat x_i \le 1 \quad (1 \le i \le n)\}.
    \end{align*}
    We use the volume form induced by \eqref{eq:basis}, namely $\D x = \D \widehat x_1 \cdots \D \widehat x_n$. According to this volume form, we have $\vol(\T) = 1$ as expected.
    For any element $a \in \K$ we have the denominator ideal
    \begin{align}\label{eq:denom}
        \F q(a) = \{b \in \OK: ab \in \OK \},
    \end{align}
    which is easily extended to vectors $\B a \in \K^s$ by setting $\F q(\B a) = \bigcap_{i} \F q(a_i)$. Denominator ideals are always principal, and we have
    \begin{align}\label{eq:nmineq}
        \card\{\bg \in (\T \cap \K)^R: \left| \Nm(\F q(\bg)) \right| = q\} \ll q^{R+\eps}
    \end{align}
    (see e.g. \cite[Lemma~5 (i)]{Sk:97}).

    In the embedding \eqref{eq:basis} we have the standard height function
    \begin{align*}
        |x| = \max\{|\widehat x_1|, \dots, |\widehat x_n|\},
    \end{align*}
    so that $|x|  \asymp \max_{v \in \Omega_{\infty}(\K)} |x|_v$. This norm extends in the obvious manner to vectors $\B x \in \V^s$. Furthermore, for $x \in \K$ we have $        |x^{-1}| \ll |x|^{n-1}/ \left|\Nm x\right|$.

    If $F \in \V[x_1, \dots, x_s]$ is a polynomial, we may consider the associated polynomial
    \begin{align*}
        \widehat F(\widehat{\B x}) = \Tr (F(\B x)) \in \R[\widehat x_{1,1}, \dots, \widehat x_{s,n}].
    \end{align*}
    Projecting on the basis vectors $\omega_l$, we also have the system
    \begin{align*}
        \widehat F_l(\widehat{\B x}) = \Tr(\omega_l F(\B x)) \in \R[\widehat x_{1,1}, \dots, \widehat x_{s,n}] \qquad (1 \le l \le n).
    \end{align*}
    Since we may assume the basis $\{\omega_1, \dots, \omega_n\}$ to be orthonormal, this isolates the $l$-th coefficient of $F(\B x)$ with respect to the representation \eqref{eq:basis}.\medskip

    We set up the circle method as in \cite{B:14}. The additive character over number fields is given by $e(x) = e^{2 \pi i \Tr x}$. To each polynomial $F^{(\rho)}$ we associate the unique symmetric $d$-linear form $\Phi^{(\rho)}$ satisfying $\Phi^{(\rho)}(\B x, \dots, \B x) = F(\B x)$. Write further $J  = \{1,\dots, m\}^d$ disregarding order, so that $\card J = r$. In this notation we have
    \begin{align}\label{eq:exp}
        F^{(\rho)}\left(t_1 \B x_1+ \dots + t_m \B x_m\right) =\sum_{ \B j \in J} A(\B j) t_{j_1}t_{j_2} \cdot \ldots \cdot t_{j_d} \Phi^{(\rho)}(\B x_{j_1}, \B x_{j_2},\ldots, \B x_{j_d})
    \end{align}
    for suitable combinatorial constants $A(\B j)$. Set
    \begin{align}\label{eq:Phi}
        \Phi^{(\rho)}_{\B j} (\B x_1, \dots, \B x_m) = A(\B j)\Phi^{(\rho)}(\B x_{j_1}, \B x_{j_2},\ldots, \B x_{j_d}),
    \end{align}
     and write $\ol x = (\B x_1, \dots, \B x_m) \in \V^{ms}$.
    It follows by expanding the system \eqref{sys} as in \eqref{eq:exp} that counting solutions $\B x_1, \dots, \B x_m$ to \eqref{sys} is equivalent to counting solutions $\ol x$ to the system
    \begin{equation}\label{eq:eqsys}
        \Phi_{\B j}^{(\rho)} (\ol x) = 0 \quad (1 \le \rho \le R, \B j \in J).
    \end{equation}
    We write $\ba^{(\rho)} = (\alpha_{\B j}^{(\rho)})_{\B j \in J}$ and $\U{\ba} = (\ba^{(1)}, \dots, \ba^{(R)})$. For the sake of completeness we also define $\U{\alpha}_{\B j} = (\alpha_{\B j}^{(1)}, \dots, \alpha_{\B j}^{(R)})$.
    In this notation we have
    \begin{align}
        N_m(P) = \sum_{\ol x \in P \cal B^{sm}} \int_{ \T^{Rr}} e\left(\sum_{\B j \in J} \sum_{\rho=1}^R \alpha_{\B j}^{(\rho)}\Phi_{\B j}^{(\rho)}(\ol x)\right) \D \U{\ba}.
    \end{align}
    It will be convenient to write
    \begin{align*}
        \F F(\ol x; \U{\ba}) = \sum_{\B j \in J} \sum_{\rho=1}^R \alpha_{\B j}^{(\rho)}\Phi_{\B j}^{(\rho)}(\ol x)
    \end{align*}
    and
    \begin{align*}
        T_P(\U{\ba}) = \sum_{\ol x \in P \cal B^{sm}} e(\F F(\ol x; \U{\ba})),
    \end{align*}
    so that
    \begin{align*}
        N_m(P) = \int_{ \T^{Rr}} T_P(\U{\ba}) \D \U{\ba}.
    \end{align*}
    We remark that these definitions can be brought back to $\R$. In fact, writing
    \begin{align*}
        \widehat{\F F}(\widehat{\ol x}; \widehat {\U{\ba}}) = \sum_{l=1}^n \sum_{\B j \in J} \sum_{\rho=1}^R \widehat \alpha_{\B j,l}^{(\rho)} \widehat \Phi_{\B j,l}^{(\rho)}(\widehat{\ol x}),
    \end{align*}
    where $\widehat {\U{\ba}}$ denotes the coefficient vector of $\U{\ba}$ according to \eqref{eq:basis}, we obtain
    \begin{align*}
        T_P(\U{\ba}) = \sum_{\substack{\widehat{\ol x} \in \Z^{mns}\\ |\widehat{\ol x}| \le P}} e\left(\widehat{\F F}(\widehat{\ol x}; \widehat{\U{\ba}})\right).
    \end{align*}

    Finally, we make some remarks as to the general notational conventions we shall adopt. Any statement involving the letter $\eps$ is claimed to hold for any $\eps>0$. Consequently, the exact `value' of $\eps$ will not be tracked and may change from one expression to the next. The letter $P$ is always used to denote a large integer. Since many of our estimates are measured in terms of $P^n$, we set this quantity equal to $\Pi$. Expressions like $\sum_{n=1}^x f(n)$, where $x$ may or may not be an integer, should be read as $\sum_{1 \le n \le x} f(n)$.
    We will abuse vector notation extensively. In particular, equalities and inequalities of vectors should always be interpreted componentwise. Similarly, for $\B a \in \Z^l$ we will write $(\B a, b) = \gcd(a_1, \dots, a_l, b)$.
    Finally, the Landau and Vinogradov symbols will be used in their established meanings, and the implied constants are allowed to depend on $s$, $m$, $d$ and $n$ as well as the coefficients of $F$, but never on $P$.

\section{Exponential Sums}\label{sec-Weyl}
    In this section we  study the exponential sum $T_P(\U{\ba})$ in greater detail. We define the discrete differencing operator $\Delta_{i,\B h}$ via its action
    \begin{align*}
        \Delta_{i, \B h}\F F(\ol x; \U{\ba}) = \F F(\B x_1, \dots, \B x_i + \B h, \dots, \B x_m; \U{\ba}) - \F F(\ol x; \U{\ba}).
    \end{align*}
    The following lemma is now a straightforward modification of \cite[Lemma~3.1]{B:14}.

    \begin{lem}
        Let $1 \le k \le d$. For $1 \le i \le k$ let $j_i$ be integers with $1 \le j_i \le m$. Then
        \begin{align*}
            |T_P(\U{\ba})|^{2^k} \ll P^{((2^k-1)m - k)ns} \sum_{\B h_1, \dots, \B h_k \in P \cal B^s} \sum_{\ol x} e\left(\Delta_{j_1, \B h_1} \cdots \Delta_{j_k, \B h_k} \F F(\ol x; \U{\ba}) \right),
        \end{align*}
        where the sum over $\ol x$ is over a suitable box contained in $P \cal B^{sm}$.
    \end{lem}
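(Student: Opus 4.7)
The plan is to prove this by induction on $k$, paralleling the classical Weyl differencing argument of \cite[Lemma~3.1]{B:14}. The key conceptual step, already established in the excerpt, is to pass to the $\widehat{\ol x}$-representation, where $T_P(\U{\ba})$ becomes a sum over integer vectors $\widehat{\ol x} \in \Z^{mns}$ with $|\widehat{\ol x}| \le P$. Once this translation is made, the combinatorics are exactly those of the rational case: the number-field data are entirely absorbed into the linear form $\widehat{\F F}$, and only the range of summation is relevant for the differencing.

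For the base case $k=1$, I would apply Cauchy--Schwarz in the complementary variable $\widehat{\ol x}'$ consisting of the $(m-1)s$ blocks of $\widehat{\ol x}$ other than $\widehat{\B x}_{j_1}$. The trivial bound on the number of such tuples is $\ll P^{(m-1)ns}$, and after expanding the square and substituting $\widehat{\B h}_1 = \widehat{\B x}_{j_1} - \widehat{\B y}_{j_1}$ in the diagonal, one recovers the stated inequality with $k=1$. The sum over $\widehat{\B x}_{j_1}$ is then restricted to $P\cal B^s \cap (P\cal B^s + \B h_1)$, which is a sub-box of $P\cal B^s$; since the remaining blocks are unchanged, the whole $\ol x$-sum runs over a box contained in $P\cal B^{sm}$, as required.

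For the inductive step, suppose the bound holds for $k-1$. Square both sides and apply Cauchy--Schwarz once more to the outer variables $(\B h_1, \ldots, \B h_{k-1}, \ol x')$, where now $\ol x'$ omits the $j_k$-th block. The trivial count of these variables is $\ll P^{(m+k-2)ns}$, and expanding the resulting square after setting $\widehat{\B h}_k = \widehat{\B x}_{j_k} - \widehat{\B y}_{j_k}$ produces precisely the $k$-fold differenced sum. The arithmetic of the exponents
\[
2\bigl((2^{k-1}-1)m - (k-1)\bigr) + (m+k-2) \;=\; (2^k - 1)m - k
\]
matches the claim exactly.

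The only point requiring genuine care is the bookkeeping of shifted boxes: when some $j_i$ repeats (in particular when $j_k$ equals an earlier $j_{i'}$), the range of $\widehat{\B x}_{j_k}$ inherited from the inductive hypothesis is already a translate of a sub-box, and squaring shifts it further. However, since every shift has size $O(P)$ and the statement only requires a \emph{suitable} box contained in $P\cal B^{sm}$, all such nested boxes remain within a bounded dilate of $P\cal B^{sm}$, and any constant factor from the dilate is absorbed into the implicit constant. I expect this bookkeeping to be the only mildly fiddly part of the proof; everything else is a line-by-line transcription of the argument over $\Q$.
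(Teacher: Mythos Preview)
Your proposal is correct and is precisely the standard Weyl differencing argument that the paper invokes; indeed, the paper gives no self-contained proof at all but simply records the lemma as ``a straightforward modification of \cite[Lemma~3.1]{B:14}''. Your inductive outline, the exponent arithmetic, and the remark on nested shifted boxes are exactly what that modification amounts to once one passes to the $\widehat{\ol x}$-coordinates.
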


    Observe that in each differencing step the degree of the forms involved decreases by one, so after $d-1$ steps we arrive at a polynomial that is linear in $\ol x$.
    For the sake of simplicity we write $\cal H$ for the $(d-1)$-tuple $(\B h_1, \dots, \B h_{d-1})$. In this notation we have
    \begin{align*}
        |T_P(\U{\ba})|^{2^{d-1}}
        & \ll P^{((2^{d-1}-1)m-(d-1))ns}\sum_{\cal H} \sum_{\ol x} e\left(\Delta_{j_1, \B h_1} \cdots \Delta_{j_{d-1}, \B h_{d-1}} \F F(\ol x; \U{\ba}) \right) \\
        &\ll  P^{(2^{d-1}m-d)ns}\sum_{\cal H} \left|\sum_{\B x_{j_{d}}}  e\left(M(\B j) \sum_{\rho=1}^R \alpha_{\B j}^{(\rho)} \Phi^{(\rho)}(\B x_{j_{d}},\cal H) \right)\right|,
    \end{align*}
    where $M(\B j)$ is a suitable combinatorial constant as in \cite[Lemma~3.2]{B:14}.

    We write $\widehat{\cal H}$ for  the coefficient vector of $\cal H$ by the representation \eqref{eq:basis} and define the functions $\widehat B_{i,l}^{(\rho)} \in \Z[\widehat{\B h}_1, \dots, \widehat{\B h}_{d-1}]$ via the identity
    \begin{align*}
        \widehat{\Phi}^{(\rho)}(\widehat{\B x}, \widehat{\cal H}) = \sum_{i=1}^s \sum_{l=1}^n \widehat x_{i,l} \widehat B_{i,l}^{(\rho)}(\widehat{\cal H}).
    \end{align*}
    In this notation the above inequality can be brought back to $\R$, where it reads
    \begin{align*}
        |T_P(\U{\ba})|^{2^{d-1}}
        & \ll P^{(2^{d-1}m-d)ns} \sum_{\widehat{\cal H}}  \prod_{i=1}^s \prod_{l=1}^n \left|\sum_{\widehat x_{i,l}}  e\left(M(\B j) \sum_{\rho=1}^R  \widehat \alpha_{\B j, l}^{(\rho)} \widehat x_{i,l} \widehat B_{i,l}^{(\rho)}(\widehat{\cal H}) \right)\right| \\
        & \ll  P^{(2^{d-1}m-d)ns}\sum_{\widehat{\cal H}}  \prod_{i=1}^s \prod_{l=1}^n \min\left\{ P, \left\| M(\B j) \sum_{\rho=1}^R  \widehat \alpha_{\B j, l}^{(\rho)} \widehat B_{i,l}^{(\rho)}(\widehat{\cal H}) \right\|^{-1} \right\}  .
    \end{align*}
    Denote by $N_{\B j}(A, B)$ the number of $(d-1)$-tuples $\widehat{\B h}_1, \dots, \widehat{\B h}_{d-1} \in \Z^{ns}$ with $|\widehat{\B h}_k| \le A$ satisfying
    \begin{align*}
        \left\| M(\B j) \sum_{\rho=1}^R  \widehat \alpha_{\B j, l}^{(\rho)} \widehat B_{i,l}^{(\rho)}(\widehat{\cal H}) \right\| < B \quad (1 \le l \le n, 1 \le i \le s).
    \end{align*}
    The argument of the proof of Lemma~3.2 of \cite{B:15FRF2} shows then that
    \begin{align*}
        \sum_{\widehat{\cal H}}  \prod_{i=1}^s \prod_{l=1}^n \min\left\{ P, \left\| M(\B j) \sum_{\rho=1}^R  \widehat \alpha_{\B j, l}^{(\rho)} \widehat B_{i,l}^{(\rho)}(\widehat{\cal H}) \right\|^{-1} \right\} \ll P^{ns+\eps} N_{\B j}(P,P),
    \end{align*}
    so it suffices to understand $N_{\B j}(P,P)$. This is an integral lattice problem and can be treated by the usual methods.

    \begin{lem}
        Suppose that $k>0$ and $\theta \in [0,1)$ are parameters and that for some $\U{\ba} \in \T^{Rr}$ one has
        \begin{align*}
            |T_P(\U{\ba})| \gg \Pi^{ms - k \theta}.
        \end{align*}
        Then for any $\B j \in J$ we have
        \begin{align*}
            N_{\B j}(P^\theta, P^{d-(d-1)\theta}) \gg (\Pi^{ \theta})^{(d-1)s-2^{d-1}k}.
        \end{align*}
    \end{lem}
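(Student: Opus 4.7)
The plan is to combine the Weyl differencing estimate built up in the displays preceding the lemma with a standard Davenport-type shrinking argument. The proof splits naturally into two steps.

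First, I would read off a lower bound for $N_{\B j}(P, P)$. The inequalities already collected give
\[
|T_P(\U\ba)|^{2^{d-1}} \ll \Pi^{(2^{d-1}m - d + 1)s + \eps}\, N_{\B j}(P, P).
\]
Substituting the hypothesis $|T_P(\U\ba)| \gg \Pi^{ms - k\theta}$ and rearranging, the $2^{d-1}ms$ terms on the two sides cancel and one obtains
\[
N_{\B j}(P, P) \gg \Pi^{(d-1)s - 2^{d-1} k\theta - \eps}.
\]
This step is routine bookkeeping.

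Second, I would transfer this bound from the parameter pair $(P, P)$ to $(P^\theta, P^{d-(d-1)\theta})$. The functions $\widehat B_{i,l}^{(\rho)}(\widehat{\cal H})$ are multilinear, hence homogeneous of total degree $d-1$, in $\widehat{\cal H}$. The appropriate shrinking lemma, already set up in this precise setting in \cite{B:14,B:15FRF2}, asserts that whenever $A' \le A$ and $B' \ge B (A/A')^{d-1}$ one has
\[
N_{\B j}(A, B) \ll (A/A')^{(d-1)ns}\, N_{\B j}(A', B').
\]
With $(A, B) = (P, P)$ and $(A', B') = (P^\theta, P^{d-(d-1)\theta})$, the compatibility condition reads $P^{d-(d-1)\theta} \ge P \cdot P^{(1-\theta)(d-1)}$ and indeed holds with equality, so the lemma applies and, combined with step one, delivers
\[
N_{\B j}(P^\theta, P^{d-(d-1)\theta}) \gg \Pi^{\theta(d-1)s - 2^{d-1}k\theta - \eps},
\]
which matches the claimed estimate up to the harmless $\Pi^\eps$ loss incurred in the counting step.

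The only conceptual point worth checking is that the geometry-of-numbers shrinking argument, originally formulated over $\Z$, applies in the number field setting. This is genuinely the case: the reduction to the $\widehat{\cdot}$ representation has already converted the problem into an integer-lattice question in $\Z^{(d-1)ns}$, so the standard proof (successive minima together with integrality of lattice translates) carries over verbatim without requiring any new input from the base field $\K$.
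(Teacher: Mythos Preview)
Your proof is correct and follows essentially the same route as the paper's: both combine the Weyl differencing estimate for $|T_P(\U\ba)|^{2^{d-1}}$ with the Davenport shrinking lemma \cite[Lemma~12.6]{Dav:AM} relating $N_{\B j}(P,P)$ to $N_{\B j}(P^\theta,P^{d-(d-1)\theta})$, the only difference being the order in which the two ingredients are applied. Your observation that the geometry-of-numbers step takes place over $\Z^{(d-1)ns}$ after passing to the $\widehat{\cdot}$ coordinates is precisely the mechanism the paper relies on, and your handling of the $\Pi^{\eps}$ loss matches the paper's convention.
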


    \begin{proof}
        This follows from the argument leading to \cite[Lemma~3.3]{B:14}. By applying standard results from the geometry of numbers \cite[Lemma~12.6]{Dav:AM} as in the proof of Lemma~3.4 of \cite{B:15FRF2}, it follows that
        \begin{align*}
            N_{\B j}(P^\theta, P^{d-(d-1)\theta}) \gg P^{-(d-1)(1-\theta)ns}N_{\B j}(P, P),
        \end{align*}
        so we find
        \begin{align*}
            |T_P(\U{\ba})|^{2^{d-1}}
            & \ll P^{(2^{d-1}m-d)ns}  P^{ns+\eps} P^{(d-1)(1-\theta)ns} N_{\B j}(P^\theta, P^{d-(d-1)\theta}).
        \end{align*}
        Under the hypotheses of the lemma we have $|T_P(\U{\ba})|^{2^{d-1}} \gg P^{2^{d-1}(mns-nk\theta)}$, and rearranging reproduces the claim.
    \end{proof}

    We may now apply the argument of \cite[Lemma~2]{Sk:97} to each $\U \alpha_{\B j}$ in turn. This is analogous to the procedure of \cite[Lemma~3.4]{B:14}, and as a result we find that, if the exponential sum is large at some value $\U \ba$, then either all components of $\U \ba$ have a good approximation in the $\K$-rational numbers, or else the system of forms $F^{(1)}, \dots, F^{(R)}$ is singular in the sense that the matrix $(B_{i,l}^{(\rho)}(\cal H))_{i,l;\rho}$ has rank less than $R$ for at least $(\Pi^{ \theta})^{(d-1)s - 2^{d-1}k - \eps}$ values of $\cal H \in P^\theta \cal B^{(d-1)s}$. Furthermore, the proof of Lemma~4 in \cite{Sk:97} now carries over unchanged, so if $s - \dim \sing^* \B F > 2^{d-1}k$, then the singular case is excluded. This yields the following tripartite case distinction.
    \begin{lem}\label{l:weyl}
        Let $0 < \theta \le 1$ and $k>0$ be parameters, and suppose that
    \begin{align}\label{eq:sing}
        s - \dim \sing^* \B F > 2^{d-1}k.
    \end{align}
        Then for each $\U{\ba} \in \T^{Rr}$ either
        \begin{enumerate}[(A)]
            \item
                the exponential sum $T_P(\U{\ba})$ is bounded by
                \begin{align*}
                    |T_P(\U{\ba})| \ll \Pi^{ms - k \theta},
                \end{align*}
                or
            \item
                for every $\B j \in J$ one finds $(q_{\B j}, \underline{a}_{\B j}) \in (\OK^+)^{R+1}$ satisfying
                \begin{align*}
                    1 \le |q_{\B j}| \ll P^{(d-1)R\theta}  \quad \hbox{and} \quad
                    \left|\U \alpha_{\B j} q_{\B j} - \U a_{\B j}\right| \ll P^{-d+(d-1)R\theta}.
                \end{align*}
        \end{enumerate}
    \end{lem}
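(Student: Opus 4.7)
The plan is to read the lemma off directly from the chain of estimates set up immediately before its statement: the preceding lemma reduces the failure of alternative~(A) to a lower bound on $N_{\B j}(P^{\theta}, P^{d-(d-1)\theta})$, the argument of \cite[Lemma~2]{Sk:97} converts that lower bound into a dichotomy between rational approximation of $\U\ba_{\B j}$ over $\K$ and abundant rank-deficiency of the differenced coefficient matrix, and the argument of \cite[Lemma~4]{Sk:97} rules out the latter possibility under the geometric hypothesis~\eqref{eq:sing}.

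In detail, I would suppose that alternative~(A) fails, so that $|T_P(\U\ba)|\gg \Pi^{ms-k\theta}$, and feed this into the preceding lemma to obtain, for each $\B j\in J$,
\begin{align*}
    N_{\B j}(P^{\theta}, P^{d-(d-1)\theta}) \gg (\Pi^{\theta})^{(d-1)s-2^{d-1}k}.
\end{align*}
By definition, $N_{\B j}$ counts tuples $\cal H\in (P^{\theta}\cal B)^{(d-1)s}$ for which the $ns$ expressions $M(\B j)\sum_{\rho} \widehat\alpha^{(\rho)}_{\B j,l}\widehat B^{(\rho)}_{i,l}(\widehat{\cal H})$ all lie within $P^{d-(d-1)\theta}$ of an integer. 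Since these are $\K$-linear in the $R$-tuple $\U\ba_{\B j}$ with integral coefficients $\widehat B^{(\rho)}_{i,l}(\widehat{\cal H})$, this is precisely the configuration to which the number-field approximation lemma of Skinner \cite[Lemma~2]{Sk:97} applies. Running it on each $\U\ba_{\B j}$ in turn yields either the approximation claimed in~(B), with $(q_{\B j},\U a_{\B j})\in (\OK^+)^{R+1}$ satisfying the stated bounds, or else the conclusion that the number of $\cal H\in (P^{\theta}\cal B)^{(d-1)s}$ for which the $R\times ns$ matrix $(\widehat B^{(\rho)}_{i,l}(\cal H))_{\rho;i,l}$ has rank strictly less than $R$ is $\gg (\Pi^{\theta})^{(d-1)s-2^{d-1}k-\eps}$.

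It then remains to exclude the singular alternative under \eqref{eq:sing}. Here I would invoke the argument of \cite[Lemma~4]{Sk:97} essentially verbatim: a large supply of rank-deficient tuples $\cal H$ forces the locus where the Jacobian of $\B F$ has rank less than $R$ to have codimension at most $2^{d-1}k$. By \eqref{eq:sing}, this contradicts the definition of $\sing^*\B F$, so the singular alternative cannot occur for any $\B j$, and alternative~(B) holds simultaneously for every $\B j\in J$.

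The main obstacle I anticipate is purely bookkeeping: Skinner's lemmas are formulated for a single form over $\K$, whereas the $\widehat\alpha^{(\rho)}_{\B j,l}$ are indexed by the basis decomposition $\V\cong\bigoplus_l \K_l$ and the running index $\rho$, and one must verify that applying them componentwise in $\B j$ yields the clean bounds in~(B) with constants independent of $\B j$, so that the final union over the $r=\card J$ choices of $\B j$ absorbs into a harmless $P^{\eps}$.
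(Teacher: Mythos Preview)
Your proposal is correct and follows essentially the same route as the paper: the paper's proof is precisely the discussion immediately preceding the lemma, which (i) invokes the preceding lemma to obtain the lower bound on $N_{\B j}(P^{\theta}, P^{d-(d-1)\theta})$ when (A) fails, (ii) applies the argument of \cite[Lemma~2]{Sk:97} to each $\U\alpha_{\B j}$ in turn to obtain either the rational approximation in (B) or abundant rank-deficiency of $(\widehat B^{(\rho)}_{i,l}(\widehat{\cal H}))$, and (iii) appeals to \cite[Lemma~4]{Sk:97} to exclude the singular alternative under \eqref{eq:sing}. Your bookkeeping concern about uniformity in $\B j$ is legitimate but, as the paper indicates by citing \cite[Lemma~3.4]{B:14}, is handled routinely and does not require new ideas.
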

    This result lies at the heart of our analysis in the next section.

\section{Application of the circle method}

    For a suitable parameter $c_1$ write
    \begin{align*}
        \F M_{q, \U a} (P, \theta)= \{\U{\alpha} \in \T ^R: |\alpha^{(\rho)} q - a^{(\rho)}| \le c_1 P^{-d+R(d-1)\theta} \quad (1 \le \rho \le R)\},
    \end{align*}
    and
    \begin{align*}
        \F M_P^*(\theta) = \bigcup_{\substack{q \in \OK^+ \setminus \{ 0\}\\ |q| \le c_1P^{R(d-1)\theta}}} \bigcup_{\substack{\U a \in (\OK^+) ^R\\ |\U a| \le |q|, (q, \U a)=1 }} \F M_{q, \U a}(P, \theta).
    \end{align*}
    We further set $\F M_P(\theta) = (\F M_P^*(\theta))^r$ and $\F m_P(\theta) = \T^{Rr} \setminus \F M_P(\theta)$. Note that the constant $c_1$ can be chosen in such a manner that the major arcs dissection reflects the case distinction of Lemma~\ref{l:weyl}.

    Now suppose that some $\U \ba \in \F M_P(\theta)$ has two distinct approximations, then for some $\B j \in J$ and $1 \le \rho \le R$ there exist two pairs of $\K$-integers $(a_1, q_1)$ and $(a_2, q_2)$ with the property that $\left|q_i\right| \ll P^{R(d-1)\theta}$ and $\left|a_i - \alpha_{\B j}^{(\rho)}q_i\right| \ll P^{-d+(d-1)R\theta}$ for $i \in \{1,2\}$. Hence we have the chain of inequalities
    \begin{align*}
        1 \ll |a_1q_2-a_2q_1| \ll \left|q_2\right| \left|a_1 - \alpha_{\B j}^{(\rho)}q_1\right| + \left|q_1\right| \left|a_2 - \alpha_{\B j}^{(\rho)}q_2\right| \ll P^{-d+2R(d-1)\theta}.
    \end{align*}
    Thus if
    \begin{align}\label{eq:disj}
        2R(d-1)\theta < d,
    \end{align}
    then the major arcs are disjoint.

    By Lemma~5 (iii) of \cite{Sk:97} we have
    \begin{align*}
        \vol \F M_P^*(\theta) \ll \Pi^{-Rd+R(R+1)(d-1)\theta+\eps},
    \end{align*}
    and hence
    \begin{align*}
        \vol \F M_P(\theta) \ll \Pi^{-Rrd+R(R+1)r(d-1)\theta+\eps}.
    \end{align*}
    It is then clear that Lemma~4.1 of \cite{B:14} can be directly transferred to the number field setting.

    \begin{lem}
        Suppose that \eqref{eq:sing} holds and that the parameters $k$ and $\theta$ satisfy
        \begin{align*}
            0 < \theta & < \theta_0 = \frac{d}{(d-1)(R+1)}
        \end{align*}
        and
        \begin{align}\label{eq:k1}
            k >  Rr(R+1)(d-1).
        \end{align}
        Then there exists a $\delta > 0$ such that the minor arcs contribution is bounded by
        \begin{align*}
            \int_{\F m_P(\theta)} |T_P(\U{\ba})| \D \U{\ba} \ll \Pi^{ms-Rrd-\delta}.
        \end{align*}
    \end{lem}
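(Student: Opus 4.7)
The plan is to combine the pointwise Weyl-type bound of Lemma~\ref{l:weyl} with a trivial integration over the torus. First I would verify that case~(B) of Lemma~\ref{l:weyl} fails for every $\U \ba \in \F m_P(\theta)$: the constant $c_1$ in the definition of $\F M_P^*(\theta)$ was tuned (as noted immediately after the definition of $\F m_P(\theta)$) so that membership of a component $\U \alpha_{\B j}$ in $\F M_P^*(\theta)$ corresponds precisely to the case~(B) approximations at that $\B j$. Since $\F M_P(\theta) = (\F M_P^*(\theta))^r$, any $\U \ba \in \F m_P(\theta)$ has some component $\U \alpha_{\B j_0} \notin \F M_P^*(\theta)$, which precludes case~(B) holding for every $\B j$ simultaneously. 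The singular alternative in the proof of Lemma~\ref{l:weyl} is ruled out by~\eqref{eq:sing}, so case~(A) must apply, giving uniformly on the minor arcs
\begin{align*}
    |T_P(\U \ba)| \ll \Pi^{ms - k\theta}.
\end{align*}

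Since $\vol(\F m_P(\theta)) \le \vol(\T^{Rr}) = 1$, integrating this bound yields
\begin{align*}
    \int_{\F m_P(\theta)} |T_P(\U \ba)| \, \D \U \ba \ll \Pi^{ms - k\theta},
\end{align*}
so the matter reduces to arranging $k\theta > Rrd$. Substituting $\theta_0 = d/((d-1)(R+1))$, condition~\eqref{eq:k1} rearranges to the strict inequality $k\theta_0 > Rrd$; by continuity one can pick $\theta < \theta_0$ sufficiently close to $\theta_0$ that $k\theta > Rrd$ still holds, whereupon $\delta := k\theta - Rrd > 0$ does the job.

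In terms of difficulty, there is essentially no obstacle here — the statement is a bookkeeping consequence of Lemma~\ref{l:weyl} together with the definition of the major arcs, and the relation~\eqref{eq:k1} has been engineered so that it is exactly what the integration requires. The only item I would take care to check is the compatibility of the major-arc box sizes $|q_{\B j}| \ll P^{(d-1)R\theta}$ and $|\U \alpha_{\B j} q_{\B j} - \U a_{\B j}| \ll P^{-d+(d-1)R\theta}$ from case~(B) with the numerical ranges in the definition of $\F M_P^*(\theta)$; this is precisely what fixes an admissible value of $c_1$. Note that the coprimality condition $(q, \U a) = 1$ in the definition of $\F M_P^*(\theta)$ causes no loss, since any case~(B) pair $(q_{\B j}, \U a_{\B j})$ can be reduced to a coprime pair without leaving the stated ranges.
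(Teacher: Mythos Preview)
Your argument establishes the bound only for $\theta$ sufficiently close to $\theta_0$, not for all $\theta$ in the range $(0,\theta_0)$ as the lemma asserts. Trivially integrating the pointwise bound $|T_P(\U\ba)| \ll \Pi^{ms-k\theta}$ over a set of volume at most $1$ gives $\Pi^{ms-k\theta}$, and this beats $\Pi^{ms-Rrd}$ only when $k\theta > Rrd$, i.e.\ when $\theta > Rrd/k$. The hypothesis \eqref{eq:k1} ensures that this lower threshold lies below $\theta_0$, but it does not make it disappear; for small $\theta$ your bound is genuinely too weak. This matters downstream: immediately after Lemma~\ref{l:gf} the paper needs to take $\theta$ \emph{small} to control the error from approximating $T_P$ on the homogeneous major arcs $\F N(\theta)$, so one cannot simply retreat to $\theta$ near $\theta_0$.

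The paper's intended proof (the transfer of \cite[Lemma~4.1]{B:14}) does not integrate trivially. It uses the major-arc volume estimate
\[
    \vol \F M_P(\vartheta) \ll \Pi^{-Rrd + R(R+1)r(d-1)\vartheta + \eps},
\]
recorded just before the lemma, together with a dyadic slicing of $\F m_P(\theta)$ into shells $\F M_P(\vartheta') \setminus \F M_P(\vartheta)$ for a sequence $\theta = \vartheta_0 < \vartheta_1 < \cdots \le 1$. On each shell the Weyl bound applies with parameter $\vartheta$ while the volume is controlled by $\vol \F M_P(\vartheta')$, and the resulting exponent $ms - Rrd - (k - R(R+1)r(d-1))\vartheta + \eps$ is acceptable for every $\vartheta \ge \theta$ precisely because of \eqref{eq:k1}. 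Your verification that case~(B) is excluded on the minor arcs is fine and is indeed the starting point, but the volume bound --- which you do not use --- is the ingredient that makes the lemma hold uniformly in $\theta$.
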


    We now define a second set of major arcs that will be easier to work with. Recall that Lemma~\ref{l:weyl} produces an approximation $\U \alpha_{\B j} =  \U a_{\B j}/ q_{\B j}  + \U \beta_{\B j}$ for each $\B j \in J$ in turn. Taking least common multiples, we find that there is an approximation $\U \ba =  \U {\B a}/ q + \U \bb$ with $|q| \le \prod_{\B j}|q_{\B j}| \ll P^{Rr(d-1)\theta}$ and $|q\U \bb| \ll P^{-d+Rr(d-1)\theta}$. Recall the definition \eqref{eq:denom}, and for $\U \bg \in (\K \cap \T)^{Rr}$ set $q_{\U \bg} = \left|\Nm( \F q(\U \bg))\right|$. In this notation we denote the homogeneous major arcs by
    \begin{align*}
        \F N_{\U \bg} = \{\U{\ba} \in \T^{Rr}: |\alpha_{\B j}^{(\rho)} - \gamma_{\B j}^{(\rho)}| \le c_2 P^{-d+Rr(d-1)n\theta} \quad (1 \le \rho \le R, \B j \in J)\}
    \end{align*}
    and
    \begin{align*}
        \F N(\theta) = \bigcup_{\substack{\U{\bg} \in (\K \cap \T)^{Rr} \\ q_{\U \bg} \le  c_2 P^{Rr(d-1)n\theta}}} \F N_{\U{\gamma}}.
    \end{align*}
    It follows from \cite[Lemma~5 (ii)]{Sk:97} that $c_2$ can be chosen in such a way that $\F M_P(\theta) \subseteq \F N(\theta)$.
    We further let
    \begin{align*}
        S(\U \bg) &= \sum_{\ol x \mmod{\F q(\U{\bg})}} e(\F F(\ol x; \U{\bg})), \\
        v_P(\U{\bb}) &= \int_{P \cal B^{sm}} e(\F F(\ol y; \U{\bb})) \D \ol y,
    \end{align*}
    and set
    \begin{align*}
        \F S(P) & = \sum_{\substack{\U{\bg}\in (\K \cap \T)^{Rr} \\ q_{\U \bg} \le c_2 P^{Rr(d-1)n\theta}}} q_{\U \bg}^{-ms}S(\U \bg), \\
        \F J(P) &=  \int_{|\U{\bb}| \le c_2 P^{-d+Rr(d-1)n\theta}}v_P(\U{\bb})\D \U{\bb}.
    \end{align*}
    In this notation the exponential sum can be approximated by a product of the truncated singular series and integral.

    \begin{lem}\label{l:gf}
        Let $\U \ba \in \T^{Rr}$ be of the shape $\U \ba = \U \bg + \U \bb$ with $\U \bg \in (\K \cap \T)^{Rr}$. Then we have
        \begin{align*}
            \left|T_P(\U \ba) - q_{\U \bg}^{-ms} S(\U \bg) v_P(\U{\bb})\right|  \ll q_{\U \bg} P^{mns-1}\left(1 + P^{d} \sum_{\rho=1}^R \sum_{\B j \in J} |\beta_{\B j}^{(\rho)}|\right).
        \end{align*}
    \end{lem}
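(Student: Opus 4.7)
The approach is the standard major-arc approximation: decompose $T_P(\U\ba)$ by residue classes modulo $\F q(\U\bg)$, recognise the $\U\bg$-part as a complete character sum, and compare each inner sum with the integral $v_P(\U\bb)$. Writing $\U\ba = \U\bg + \U\bb$ splits $\F F(\ol x;\U\ba) = \F F(\ol x;\U\bg) + \F F(\ol x;\U\bb)$. By the definition \eqref{eq:denom} of the denominator ideal, $\gamma_{\B j}^{(\rho)}\F q(\U\bg)\subseteq\OK$ for every $\rho$ and $\B j$, so whenever $\ol x \equiv \ol r \mmod{\F q(\U\bg)}$ the difference $\F F(\ol x;\U\bg) - \F F(\ol r;\U\bg)$ lies in $\OK$; consequently $e(\F F(\ol x;\U\bg)) = e(\F F(\ol r;\U\bg))$, since $e$ is trivial on $\OK$. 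Partitioning accordingly gives
\begin{align*}
    T_P(\U\ba) = \sum_{\ol r\mmod{\F q(\U\bg)}} e(\F F(\ol r;\U\bg))\,W_{\ol r}(\U\bb),
\end{align*}
where $W_{\ol r}(\U\bb)$ denotes the sum of $e(\F F(\ol x;\U\bb))$ over those $\ol x \in P\cal B^{sm}$ with $\ol x \equiv \ol r \mmod{\F q(\U\bg)}$.

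Next I would approximate each inner sum $W_{\ol r}(\U\bb)$ by $q_{\U\bg}^{-ms}v_P(\U\bb)$. Fix a generator $q\in\OK^+$ of the principal ideal $\F q(\U\bg)$ whose conjugates are all of size $\asymp q_{\U\bg}^{1/n}$; such a generator is available after multiplication by a suitable unit, by a Minkowski-type argument combined with Dirichlet's unit theorem. Substituting $\ol x = \ol r + q\ol z$ with $\ol z\in\OK^{sm}$ and passing to real coordinates rewrites $W_{\ol r}(\U\bb)$ as a sum over $\widehat{\ol z}\in\Z^{nsm}$ in a parallelepiped of volume $\asymp \Pi^{sm}/q_{\U\bg}^{sm}$ with minimal edge length $\asymp P/q_{\U\bg}^{1/n}$. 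The substitution has Jacobian $q_{\U\bg}^{sm}$, so the integral over this parallelepiped of the substituted phase equals $q_{\U\bg}^{-ms}v_P(\U\bb)$. Since $|\nabla_{\widehat{\ol x}}\widehat{\F F}(\widehat{\ol x};\widehat{\U\bb})| \ll P^{d-1}\sum_{\rho,\B j}|\beta_{\B j}^{(\rho)}|$ on $P\cal B^{sm}$, the chain rule bounds the gradient of the substituted phase in $\widehat{\ol z}$ by $q_{\U\bg}^{1/n}P^{d-1}\sum_{\rho,\B j}|\beta_{\B j}^{(\rho)}|$. A standard Riemann-sum comparison (mean value theorem on each unit cell of $\Z^{nsm}$, plus a boundary count) then yields
\begin{align*}
    W_{\ol r}(\U\bb) = q_{\U\bg}^{-ms}v_P(\U\bb) + O\Bigl( q_{\U\bg}^{-ms+1/n} P^{mns-1}\Bigl(1 + P^d\sum_{\rho,\B j}|\beta_{\B j}^{(\rho)}|\Bigr)\Bigr).
\end{align*}
Summing over the $q_{\U\bg}^{ms}$ residue classes assembles the factor $S(\U\bg)$ in the main term, and the aggregated error is $O(q_{\U\bg}^{1/n} P^{mns-1}(1+P^d\sum|\beta_{\B j}^{(\rho)}|))$, which in particular is absorbed in the coarser bound $q_{\U\bg}P^{mns-1}(1+P^d\sum|\beta_{\B j}^{(\rho)}|)$ asserted in the statement.

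The main technical point is the lattice comparison in the second step, because the shape of the lattice $\F q(\U\bg)\OK^{sm}\subseteq\V^{sm}$ depends on $\U\bg$ and must be controlled uniformly. The selection of a balanced generator $q$ fixes its successive minima to be of order $q_{\U\bg}^{1/n}$ in every direction, and the inequality $|x^{-1}|\ll|x|^{n-1}/|\Nm x|$ from Section~\ref{s:not} allows one to pass freely between $|q|$ and $q_{\U\bg}$ in the error analysis. With these ingredients in place, the remainder of the argument is a direct number-field analogue of the proof of Lemma~4.2 of~\cite{B:14}, the rational version being the template one emulates throughout.
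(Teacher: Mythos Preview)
Your argument is correct and follows the standard major-arc approximation: split into residue classes modulo the denominator ideal, exploit periodicity of $e(\F F(\,\cdot\,;\U\bg))$, and compare the resulting lattice sums with the integral via a Riemann-sum estimate controlled by the gradient of the phase. The choice of a generator of $\F q(\U\bg)$ with balanced archimedean valuations is the right device to make the lattice comparison uniform, and your per-class error $q_{\U\bg}^{-ms+1/n}P^{mns-1}(1+P^d\sum|\beta_{\B j}^{(\rho)}|)$ is in fact slightly sharper than what the lemma asserts.

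For comparison, the paper does not supply its own argument at all: it simply records that the statement is \cite[Lemma~5.2]{FM} specialised to the present setting. What you have written is essentially a sketch of the proof that lies behind that citation (and behind the rational prototype \cite[Lemma~4.2]{B:14} you mention), so there is no genuine methodological divergence---you are just making explicit what the paper delegates to the literature. One cosmetic point: insisting that the balanced generator lie in $\OK^+$ is unnecessary and may not always be arrangeable simultaneously with the balancing; since only the size of the conjugates enters the estimates, you can drop that requirement without loss.
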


    \begin{proof}
        This is \cite[Lemma~5.2]{FM} specified to our situation.
    \end{proof}
    We can now integrate over the major arcs $\F N(\theta)$. Their volume is easily computed using the fact that $\vol \F N_{\U \bg} \ll (P^{-d+(d-1)nRr\theta})^{nRr}$. Thus, using \eqref{eq:nmineq}, we have
    \begin{align*}
        \vol \F N(\theta) &\ll \sum_{q=1}^{c_2 P^{Rr(d-1)n\theta}} \sum_{\substack{\U \bg \in (\K \cap \T)^{Rr} \\ q_{\U \bg} =q }} \vol \F N_{\U \bg} \ll P^{-nRrd + ((n+1)Rr+1)Rr(d-1)n\theta+\eps}.
    \end{align*}
    It follows that
    \begin{align*}
        \int_{\F N(\theta)} |T_P(\U{\ba})| \D \U{\ba} - \F S(P)\F J(P) &\ll \vol \F N(\theta )  \sup_{\U{\ba} = \U \bg + \U \bb\in \F N(\theta)}  |T_P(\U{\ba}) -  q_{\U \bg}^{-ms} S(\U \bg) v_P(\U{\bb})|\\
        & \ll P^{mns-nRrd -1+ ((n+1)Rr+3)Rr(d-1)n\theta+\eps}.
    \end{align*}
    It is clear that this is dominated by $\Pi^{ms-Rrd-\delta}$ for some $\delta>0$ whenever $\theta$ has been chosen small enough.
    Furthermore, a standard rescaling shows that
    \begin{align}\label{eq:intp}
        v_P(\U{\bb}) = \Pi^{ms} v_1( P^{d} \U{\bb}),
    \end{align}
    and therefore
    \begin{align*}
      \F J(P) =  \Pi^{ms-Rrd} \int_{|\U{\bb}| \le c_2 P^{Rr(d-1)n\theta}} v_1(\U{\bb}) \D \U{\bb}.
    \end{align*}
    It thus remains to see that the limits $\F S = \lim_{P \to \infty}\F S(P)$ of the singular series and $\F J = \lim_{P \to \infty}\Pi^{-ms+Rrd}\F J(P)$ of the rescaled singular integral exist.

    \begin{lem}\label{l:ss}
        Let $k$ be as in Lemma~\ref{l:weyl}. For any $\U \bg \in (\T \cap \K)^{Rr}$ we have
        \begin{align*}
            q_{\U \bg}^{-ms} |S(\U \bg)| \ll q_{\U \bg}^{-\frac{k}{R(d-1)}}.
        \end{align*}
    \end{lem}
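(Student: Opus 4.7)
The plan is to establish the bound by applying the Weyl differencing machinery of Section~\ref{sec-Weyl} directly to the complete sum $S(\U \bg)$, with all variables now running over a complete set of residues in $\OK$ modulo the denominator ideal $\F q(\U \bg)$ rather than over a box of scale $P$.

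First, I would iterate the differencing operator $\Delta_{i,\B h}$ a total of $d-1$ times on $S(\U \bg)$, with each $\B h_i$ now ranging over $(\OK/\F q(\U \bg))^s$. As in the derivation leading to Lemma~\ref{l:weyl}, after these differencings the phase becomes linear in the remaining variable $\B x_{j_d}$, and the inner character sum over $\B x_{j_d} \in (\OK/\F q(\U \bg))^s$ either vanishes or equals $q_{\U \bg}^s$, according to whether a suitable integrality condition on the induced linear coefficients $\sum_\rho \gamma_{\B j}^{(\rho)} B_{i,l}^{(\rho)}(\cal H)$ is met. In analogy with the pre-factor produced in the derivation preceding Lemma~\ref{l:weyl}, this would yield an estimate of the shape
\begin{align*}
    |S(\U \bg)|^{2^{d-1}} \ll q_{\U \bg}^{(2^{d-1}m - d + 1)s}\, N^*(\F q(\U \bg)),
\end{align*}
where $N^*(\F q(\U \bg))$ counts the admissible $\cal H \in (\OK/\F q(\U \bg))^{(d-1)s}$.

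Second, I would bound $N^*(\F q(\U \bg))$ via the singular-locus hypothesis, paralleling the passage following the lattice estimate for $N_{\B j}$ in Section~\ref{sec-Weyl}. The integrality condition forces the matrix $(B_{i,l}^{(\rho)}(\cal H))$ to drop rank modulo $\F q(\U \bg)$, and the hypothesis $s - \dim \sing^* \B F > 2^{d-1}k$ then restricts the count to
\begin{align*}
    N^*(\F q(\U \bg)) \ll q_{\U \bg}^{(d-1)s - 2^{d-1}k/((d-1)R)},
\end{align*}
via the same lattice-dilation and rank-drop arguments as before, but calibrated to the scale $q_{\U \bg}^{1/n}$ in place of $P$. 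Substituting into the previous display produces the claimed estimate $|S(\U \bg)| \ll q_{\U \bg}^{ms - k/((d-1)R)}$.

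The main obstacle will be the estimate for $N^*(\F q(\U \bg))$, which demands a number-field adaptation of the lattice-counting and rational-approximation steps from the proof of Lemma~\ref{l:weyl}. The saving $1/((d-1)R)$ in the exponent emerges from Skinner's iterated approximation argument \cite[Lemma~2]{Sk:97}, now applied at the scale of the denominator ideal rather than of the box; careful bookkeeping is needed to reconcile the archimedean norm with the ideal-theoretic estimates so that the exponent comes out exactly as claimed, without any $\eps$-loss.
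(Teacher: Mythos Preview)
Your strategy is genuinely different from the paper's, and the step bounding $N^*(\F q(\U\bg))$ is a real gap.

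The paper does not re-run Weyl differencing on the complete sum at all. Instead, it specialises Lemma~\ref{l:gf} at $\U\ba=\U\bg$ (so $\U\bb=\B 0$), uses $v_1(\B 0)\asymp 1$ together with \eqref{eq:intp}, and deduces that
\[
    q_{\U\bg}^{-ms}|S(\U\bg)| \ll Q^{-mns}|T_Q(\U\bg)| + Q^{-1}q_{\U\bg}
\]
for every scale $Q$. The trick is now to take $Q=q_{\U\bg}^A$ with $A$ large and choose $\theta$ so that a minimal element $q\in\F q(\U\bg)$ satisfies $|q|=c_1 Q^{(d-1)R\theta}$; this places $\U\bg$ exactly on the boundary of $\F M_Q(\theta)$, where the minor arcs bound of Lemma~\ref{l:weyl}~(A) still applies and yields $Q^{-mns}|T_Q(\U\bg)|\ll Q^{-nk\theta}\ll q_{\U\bg}^{-k/(R(d-1))}$. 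The factor $(R(d-1))^{-1}$ that you are trying to reproduce thus arises simply from the \emph{choice of $\theta$} in the already-established Weyl inequality, not from any new counting argument.

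Your proposed bound on $N^*$ does not follow from the machinery you cite. The lattice-dilation step and Skinner's iterated approximation \cite[Lemma~2]{Sk:97} are devices for \emph{incomplete} sums: they compare archimedean boxes of different sizes and output a rational approximation to an a~priori irrational $\U\ba$. In your setting $\U\bg$ is already rational, the differencing vectors $\cal H$ range over a full residue system modulo $\F q(\U\bg)$, and the condition is ideal-theoretic, so neither tool transfers. For a prime ideal $\F p$ one could indeed bound $N^*$ via the codimension of the rank-drop locus, but this gives a saving of $s-\dim\sing^*\B F>2^{d-1}k$, not $2^{d-1}k/((d-1)R)$, so the specific exponent you write down has no evident source; and for prime powers $\F p^j$ with $j\ge 2$ the rank-drop counting breaks down entirely, since you are no longer over a field. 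Getting a uniform bound for all $\F q(\U\bg)$ by the direct route would require multiplicativity, a separate Hensel-type treatment of prime powers, and would in any case be an essentially different argument from what you outline.
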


    \begin{proof}
        Here we follow the treatment of \cite[Lemma~4.1 resp. 7.1]{B:15FRF3}, which is in turn a simplification of \cite[Lemma~8.2]{BHB:14}. From combining Lemma~\ref{l:gf} with \eqref{eq:intp} and observing that $v_1(\U \bb) \asymp 1$, it follows that the relation
        \begin{align}\label{eq:ss}
            q_{\U \bg}^{-ms} |S(\U \bg)| \ll Q^{-mns}|T_Q(\U \bg)| + Q^{-1}q_{\U \bg}
        \end{align}
        holds for any parameter $Q$. We set $Q = q_{\U \bg}^A$ for some suitably large parameter $A$. Take $q \in \F q(\U \bg) \setminus \{0\}$ such that $|q|$ is minimal, then it follows from Minkowski's Theorem that $q_{\U \bg} \gg |q|^n$. Fix $\theta$ such that $|q| = c_1Q^{(d-1)R\theta}$, so that $\U \bg \in \F M_Q(\theta)$. Observe further that by taking $A$ large enough we may assume that \eqref{eq:disj} is satisfied, so the major arcs are disjoint and $\U \bg$ lies just on the edge of the major arcs $\F M_Q(\theta)$. By continuity, the minor arcs bound for $T_Q(\U \bg)$ is still applicable on the boundary of the minor arcs, and we find from Lemma~\ref{l:weyl} (A) that
        \begin{align*}
            Q^{-mns}|T_Q(\U \bg)| \ll Q^{-nk \theta} \ll |q_{\U \bg}|^{-\frac{k}{R(d-1)}}.
        \end{align*}
        The proof is now complete upon inserting this bound into \eqref{eq:ss} and choosing $A$ sufficiently large.
    \end{proof}

    With the help of Lemma~\ref{l:ss} we can show that the singular series converges.
    In fact, by \eqref{eq:nmineq} we have
    \begin{align*}
        \F S = \sum_{\U{\bg} \in (\K \cap \T)^{Rr}} q_{\U \bg}^{-ms}S(\U \bg) %\\
        & \ll  \sum_{q=1}^\infty q^{-\frac{k}{R(d-1)}} \sum_{\substack{\U{\bg} \in (\K \cap \T)^{Rr}\\ q_{\U \bg} = q}} 1%\\
         \ll \sum_{q=1}^\infty q^{Rr-\frac{k}{R(d-1)} + \eps},
    \end{align*}
    and this sum converges whenever
    \begin{align}\label{eq:K2}
        k > R(d-1)(Rr+1).
    \end{align}

    We now turn to the completion of the singular integral.
    \begin{lem}\label{l:si}
        For any $\U \bb \in \V^{Rr}$ we have
        \begin{align*}
            |v_1(\U \bb)| \ll (1+|\U \bb|)^{-\frac{nk}{R(d-1)}}.
        \end{align*}
    \end{lem}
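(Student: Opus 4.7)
The plan is to mirror the argument of Lemma~\ref{l:ss} with the roles of the singular series and the singular integral exchanged. I begin by applying Lemma~\ref{l:gf} with $\U \bg = 0$ (so that $q_{\U \bg} = 1$ and $S(\U \bg) = 1$) and with $\U \ba = \U \bb/P^d$ for a parameter $P$ to be chosen, yielding
\begin{align*}
\bigl|T_P(\U \bb/P^d) - v_P(\U \bb/P^d)\bigr| \ll P^{mns-1}\bigl(1 + |\U \bb|\bigr).
\end{align*}
The rescaling identity underlying \eqref{eq:intp}, applied with $\U \bb$ replaced by $\U \bb/P^d$, gives $v_P(\U \bb/P^d) = P^{mns} v_1(\U \bb)$, so that
\begin{align*}
|v_1(\U \bb)| \ll P^{-mns}|T_P(\U \bb/P^d)| + P^{-1}(1+|\U \bb|).
\end{align*}

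I may assume $|\U \bb|$ to be sufficiently large (the claim is trivial otherwise), and I fix $\theta > 0$ small enough that $\theta < d/(2(d-1)R)$ and $1 - 1/((d-1)R\theta) \le -nk/((d-1)R)$. I then define $P$ by $|\U \bb| = c_3 P^{(d-1)R\theta}$ for a constant $c_3 > c_1$, which places $\U \bb/P^d$ strictly outside the trivial major arc $\F M_{1,0}(P,\theta)$. The key geometric observation is that $\U \bb/P^d$ also avoids every non-trivial major arc: if $\U \bb/P^d \in \F M_{q, \U a}(P, \theta)$ with $\U a \ne 0$, then
\begin{align*}
|\U a| \le |q|\cdot |\U \bb/P^d| + |q\U \bb/P^d - \U a| \ll P^{-d+2(d-1)R\theta},
\end{align*}
which for $\theta$ in my range and $|\U \bb|$ large forces $|\U a| < 1$, contradicting the fact that every non-zero element of $\OK^+$ has $|\cdot|$-norm at least $1$. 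Consequently $\U \bb/P^d \in \F m_P(\theta)$, and Lemma~\ref{l:weyl}(A) supplies the minor arcs bound $|T_P(\U \bb/P^d)| \ll P^{mns-nk\theta}$.

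Combining these estimates and using $P \asymp |\U \bb|^{1/((d-1)R\theta)}$ yields
\begin{align*}
|v_1(\U \bb)| \ll |\U \bb|^{-nk/((d-1)R)} + |\U \bb|^{1 - 1/((d-1)R\theta)},
\end{align*}
and the choice of $\theta$ guarantees that the second term is dominated by the first, completing the proof. The main technical obstacle will be verifying the exclusion of $\U \bb/P^d$ from non-trivial major arcs; this is a simple consequence of the disjointness mechanism from \eqref{eq:disj} together with the integrality of $\U a$, but it must be executed with some care to ensure that the constraints imposed on $\theta$ here are compatible with the constraints \eqref{eq:sing} and \eqref{eq:k1} already in force.
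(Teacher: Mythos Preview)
Your argument is correct and follows essentially the same route as the paper: apply Lemma~\ref{l:gf} with $\U\bg=0$ and the rescaling \eqref{eq:intp} to compare $v_1(\U\bb)$ with $T_P(P^{-d}\U\bb)$, then invoke the minor arcs bound from Lemma~\ref{l:weyl}(A). The only cosmetic difference is that the paper sets $Q=|\U\bb|^A$ for large $A$ and places $Q^{-d}\U\bb$ exactly on the boundary of the trivial major arc (appealing to continuity and disjointness), whereas you fix $\theta$ small and place $P^{-d}\U\bb$ just outside it, then verify directly that it lies in no other major arc; these amount to the same disjointness argument \eqref{eq:disj} viewed from two sides.
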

    \begin{proof}
        This is similar to the previous lemma. Observe that the statement is trivial for $|\U \bb| \le 1$, so we may assume $|\U \bb| > 1$ for the remainder of the argument. By taking $\U{\B a} = \U{\bm 0}$ and $q = 1$, Lemma~\ref{l:gf} together with \eqref{eq:intp} show for any $Q$ that
        \begin{align}\label{eq:v-bd}
            |v_1(\U \bb)| = Q^{-mns} |v_Q(Q^{-d}\U \bb)| \ll Q^{-mns} |T_Q(Q^{-d} \U \bb)| + Q^{-1}|\U \bb|,
        \end{align}
        where we used that $S(\U {\bm 0}) = 1$. We now set $Q = |\U \bb|^A$ for some suitably large parameter $A$ and determine $\theta$ such that $|\U \bb| =c_1 Q^{(d-1)R\theta}$, so that $P^{-d} \U \bb \in \F M_Q(\theta)$ with approximation $\U{\B a} = \U{\bm 0}$ and $q = 1$. Furthermore, by choosing $A$ large enough we can enforce \eqref{eq:disj}, so we may assume the major arcs to be disjoint.
        As in the previous lemma, this implies that the point $Q^{-d}\U \bb$ lies just on the edge of the major arcs in a region where the minor arcs bound of Lemma~\ref{l:weyl} is still valid. This leads to the complementary bound
        \begin{align*}
            Q^{-mns}|T_Q(Q^{-d} \U \bb)| \ll Q^{-nk\theta} \ll |\U \bb|^{- \frac{nk}{R(d-1)}}.
        \end{align*}
        On inserting this into \eqref{eq:v-bd}, we see that
        \begin{align*}
            |v_1(\U \bb)|  \ll |\U \bb|^{-\frac{nk}{(d-1)R}}+ Q^{-1}|\U \bb| = |\U \bb|^{-\frac{nk}{(d-1)R}}+ |\U \bb|^{1-A},
        \end{align*}
        which is satisfactory whenever $A$ has been chosen large enough.

    \end{proof}

    As in the case of the singular series, we can now complete the singular integral. We have
    \begin{align*}
        \int_{|\U \bb| \le X} v_1(\U \bb) \D \U \bb  \ll \int_{|\U \bb| \le X} (1 + |\U \bb|)^{-\frac{nk}{(d-1)R}} \D \U \bb \ll 1+X^{n\left(Rr-\frac{k}{(d-1)R}\right)},
    \end{align*}
    from whence it follows that the limit $X \to \infty$ exists as soon as \eqref{eq:K2} holds. Finally, we take note that \eqref{eq:K2} is strictly implied by \eqref{eq:k1}. This proves Theorem \ref{thm:main}.

\section{The local factors}\label{s:local}
    It is a consequence of the Chinese Remainder Theorem~that we have the product representation
    \begin{align*}
        \F S = \prod_{\F p \subseteq \OK \; \textrm{prime}} \chi_{\F p},
    \end{align*}
    where
    \begin{align*}
        \chi_{\F p} &= \sum_{j=0}^\infty \sum_{\substack{\U{\bg} \in (\K \cap \T)^{Rr} \\ \F q(\U \bg) = \F p^j}} \left|\Nm \F p\right|^{-jms}S(\U \bg).
    \end{align*}
    Furthermore, a straightforward modification of standard arguments as in \cite[Chapter 5]{Dav:AM} shows that this product converges, and furthermore that the factors can be rewritten as
    \begin{align*}
        \chi_{\F p}&= \lim_{j \to \infty} \left|\Nm \F p\right|^{-jms} \sum_{\ol x \mmod{\F p^j}} \sum_{\substack{\U{\bg} \in (\K \cap \T)^{Rr} \\ \F p^j \subseteq \F q(\U \bg)}} e(\F F(\ol x; \U \bg))\\
        & = \lim_{j \to \infty} \left|\Nm \F p\right|^{j(Rr-ms)} \Gamma(\F p^j),
    \end{align*}
    where
    \begin{align*}
        \Gamma(\F p^j) = \card\{\ol x \mmod{\F p^j}: \Phi_{\B j}^{(\rho)}(\ol x) \in \F p^j \quad (1 \le \rho \le R, \B j \in J)\}.
    \end{align*}
    Let $v=v(\F p)$ denote the place associated to the prime ideal $\F p$, then we will equivalently write $\chi_{\F p} = \chi_{v(\F p)}$.
    For $v \in \Omega_0(\K)$ let $\gamma_\K^{(v)}(R,m,d)$ denote the smallest integer $\gamma$ such that any system of $R$ forms of degree $d$ over $\K$ contains an $m$-dimensional linear subspace in $\K_v$, and write $\gamma_\K^{(0)}(R,m,d) = \max_{v \in \Omega_0(\K)} \gamma_\K^{(v)}(R,m,d)$.  Then we have a lower bound for bound for $\Gamma(\F p^j)$ which suffices to show that the local factor $\chi_{\F p}$ is positive.

    \begin{lem}\label{l:Gam2}
        We have
        \begin{align*}
            \Gamma(\F p^j) \gg \left|\Nm \F p\right|^{j(ms-\gamma_\K^{(\F p)}(R,m,d))},
        \end{align*}
        and thus $\chi_{\F p}\gg 1$ whenever
        \begin{align*}
            k > (d-1)R\gamma_\K^{(\F p)}(R,m,d).
        \end{align*}
        Here $k$ is the parameter of Lemma~\ref{l:weyl}.
    \end{lem}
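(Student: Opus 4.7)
The plan is to prove the two assertions of Lemma~\ref{l:Gam2} in sequence. First, I would establish the lower bound on $\Gamma(\F p^j)$ by producing many congruential solutions starting from an $m$-dimensional linear subspace guaranteed by the definition of $\gamma$; then, I would deduce the positivity of $\chi_\F p$ by combining this bound with the exponential sum estimate of Lemma~\ref{l:ss}.

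For the lower bound on $\Gamma$ I would begin by noting that the hypothesis $k > (d-1)R\gamma$ in combination with \eqref{eq:sing} forces $s \ge \gamma := \gamma_\K^{(\F p)}(R, m, d)$. Appealing to the defining property of $\gamma$ applied to the system $\{F^{(\rho)}\}_{\rho=1}^R$ restricted to the first $\gamma$ of the $s$ coordinates yields an $m$-dimensional $\K_\F p$-linear subspace lying on the common zero locus. Scaling representatives and padding with zeros yields a point $\ol y = (\B y_1, \dots, \B y_m) \in (\OK_\F p)^{ms}$ with $\Phi^{(\rho)}_{\B j}(\ol y) = 0$ for every $\rho$ and $\B j \in J$, each $\B y_i$ supported on the first $\gamma$ coordinates.

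Next, I would count lifts of $\ol y$ modulo $\F p^j$. Writing $\ol x = \ol y + \ol u$ and expanding $\Phi^{(\rho)}_{\B j}(\ol y+\ol u)$ via the $d$-linearity of $\Phi^{(\rho)}$, the system $\Phi^{(\rho)}_{\B j}(\ol x) \equiv 0 \pmod{\F p^j}$ reduces to a Hensel-type polynomial congruence on $\ol u$. A standard multivariable Hensel lift (as in \cite[Chapter~5]{Dav:AM}, suitably adapted to the number-field setting) produces
\[
\Gamma(\F p^j) \gg |\Nm \F p|^{j(ms - r(\ol y))},
\]
where $r(\ol y)$ is the rank of the Jacobian of $(\Phi^{(\rho)}_{\B j})$ at $\ol y$. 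The key step is to show $r(\ol y) \le \gamma$, for which I would exploit the identities $\langle \nabla F^{(\rho)}(\sum_i t_i \B y_i),\B y_j\rangle = 0$ in $\K_\F p[t_1,\dots,t_m]$, obtained by differentiating $F^{(\rho)}(\sum_i t_i \B y_i) \equiv 0$ with respect to $t_j$. These identities, together with the restricted support of $\ol y$ to only $\gamma$ of the $s$ coordinates, force a significant number of dependencies among the $Rr$ rows of the Jacobian, cutting its rank down to at most $\gamma$.

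Once the lower bound is established, the positivity of $\chi_\F p$ follows by decomposing
\[
\chi_\F p = 1 + \sum_{j \ge 1} \sum_{\substack{\U \bg \in (\K \cap \T)^{Rr} \\ q_{\U \bg} = |\Nm \F p|^j}} |\Nm \F p|^{-jms} S(\U \bg)
\]
and bounding the tail via Lemma~\ref{l:ss} and \eqref{eq:nmineq} by a geometric series of ratio $|\Nm \F p|^{Rr - k/(R(d-1)) + \eps}$. The hypothesis $k > (d-1)R\gamma$ ensures that this series is summable and the tail is controlled; combining with the lower bound on $\Gamma$ by passing to the limit in the identity $\chi_\F p = \lim_N |\Nm \F p|^{N(Rr-ms)} \Gamma(\F p^N)$ yields $\chi_\F p \gg 1$.

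The principal obstacle will be the rank estimate $r(\ol y) \le \gamma$ in the Hensel step: the naive bound is only $r(\ol y) \le Rr$, and obtaining the refined bound in terms of $\gamma$ requires carefully combining the support structure of $\ol y$ with the polynomial identities encoding the vanishing of $F^{(\rho)}$ on the $m$-dimensional span of the $\B y_i$'s.
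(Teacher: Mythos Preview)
Your approach to the first assertion has a genuine gap. The formula you invoke,
\[
\Gamma(\F p^j) \gg |\Nm \F p|^{j(ms - r(\ol y))}
\]
with $r(\ol y)$ the Jacobian rank at $\ol y$, is not a consequence of any standard Hensel lemma. Hensel lifting requires the Jacobian to have \emph{full} rank $Rr$; if $r(\ol y) < Rr$, one can lift $\ol y$ on the subscheme cut out by some $r(\ol y)$ of the equations, but there is no reason the remaining $Rr - r(\ol y)$ equations continue to vanish on that lift. (Already the toy system $g_1 = x$, $g_2 = y^2$ over $\Z/p^j\Z$ shows the count can fall well below $p^{j(2-1)}$.) The definition of $\gamma_\K^{(\F p)}(R,m,d)$ guarantees only the existence of an $m$-plane in $\K_\F p^\gamma$, not a non-singular one, so you cannot assume $r(\ol y) = Rr$. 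Moreover, your sketch of the rank bound $r(\ol y)\le\gamma$ is not convincing: the support of $\ol y$ is $m\gamma$-dimensional, and the tangential identities $\langle \nabla F^{(\rho)},\B y_j\rangle=0$ cut the rank by at most $m$ in each block, which is far from what you need.

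The paper proceeds entirely differently. It adapts Schmidt's combinatorial argument (Lemma~2 of \cite{Sch:82-4}), which partitions the residues in $(\OK/\F p^j)^{ms}$ via cyclic subgroups and uses the defining property of $\gamma$ directly on the quotient, with no smoothness hypothesis and no Hensel step. This is precisely why the lemma holds without any assumption on $\sing_m \B F$; your Hensel route, done correctly, would at best recover the \emph{alternative} bound recorded immediately after the lemma, which needs the extra input $ms - \dim \sing_m \B F \ge \gamma$ to locate a non-singular $\F p$-adic point.

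Your argument for the second assertion --- comparing the partial product $|\Nm \F p|^{N(Rr-ms)}\Gamma(\F p^N)$ with $\chi_\F p$ via the tail estimate from Lemma~\ref{l:ss} and \eqref{eq:nmineq}, and then using $k > (d-1)R\gamma$ to make the tail $o(|\Nm \F p|^{N(Rr-\gamma)})$ --- is essentially the approach of \cite[\S 7]{B:14} that the paper invokes, and is sound once the first part is in hand.
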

    \begin{proof}
        The first statement is an adaptation of Schmidt \cite[Lemma~2]{Sch:82-4} (see also \cite[Lemma~4.4]{B:PhD}). The proof uses a combinatorial argument involving cyclic subgroups of the additive group $(\OK / \F p^j)^{ms}$, which carries over to number fields without difficulties. The second statement is easily obtained by adapting the arguments of \cite[\S 7]{B:14}.
    \end{proof}
    The quantity $\gamma_\K^{(\F p)}(R,m,d)$ can be bounded by results from the literature. For instance, Wooley \cite[Theorem~2.4]{W:98loc} shows that
    \begin{align*}
        \gamma_\K^{(0)}(R,m,d) \le (R^2 d^2 + mR)^{2^{d-2}} d^{2^{d-1}}
    \end{align*}
    for all algebraic number fields $\K$.

    We also record an alternative bound of a more geometric flavour. Define the singular locus of the expanded system \eqref{eq:Phi} as
    \begin{align*}
        \sing_m \B F = \sing \bm\Phi \subset \A_\K^{ms}.
    \end{align*}
    In this notation \cite[Theorem~5.1]{B:15p} shows that $\Gamma(\F p^j) \gg \left|\Nm \F p\right|^{j(ms-Rr)}$, and hence $\chi_{\F p} \gg 1$, as soon as
        \begin{align*}
            ms - \dim \sing_m \B F \ge \gamma_\K^{(\F p)}(R,m,d).
        \end{align*}
    The proof rests only on Hensel's Lemma and a geometric argument, both of which carry over to the number field setting unchanged. 
    
    It remains to consider the singular integral
    \begin{align*}
        \chi_{\infty} = \int_{\V^{Rr}} v_1(\U \bb) \D \U \bb.
    \end{align*}
    As in \cite[\S 6]{Sk:94}, we observe that $v_1(\U \bb)$ factorises as a product over the infinite places of $\K$. Recall the notation $x^{(l)}$ for the projection of $x$ onto $\K_l$, then we have
    \begin{align*}
        v_1(\U \bb) = \prod_{l=1}^{n_1+n_2} v_1^{(l)}(\U \bb^{(l)}),
    \end{align*}
    where the factors are given by
    \begin{align*}
        v_1^{(l)}(\U \bb^{(l)}) = \int_{[-1,1]^{ms}} e( \F F^{(l)}(\ol x^{(l)}; \U \bb^{(l)})) \D \ol x^{(l)}
    \end{align*}
    in the case $1 \le l \le n_1$ when $\K_l$ is real, and
    \begin{align*}
        v_1^{(l)}(\U \bb^{(l)}) = \int_{[-1,1]^{2ms}} e( 2 \Re \F F^{(l)}(\ol x^{(l)}; \U \bb^{(l)})) \D \Re \ol x^{(l)} \D \Im \ol x^{(l)}
    \end{align*}
    at the complex places $n_1 +1 \le l \le n_1 + n_2$. Correspondingly, we find
    \begin{align*}
        \chi_{\infty} = \int_{\V^{Rr}}\prod_{l=1}^{n_1+n_2} v_1^{(l)}(\U \bb^{(l)}) \D \U \bb = \prod_{l=1}^{n_1+n_2} \int_{\K_l^{Rr}}v_1^{(l)}(\U \bb^{(l)}) \D \U \bb^{(l)} = \prod_{v \in \Omega_\infty(\K)} \chi_{v}.
    \end{align*}
    It remains to investigate under what conditions these factors are positive. For $v \in \Omega_\infty(\K)$ we define
    \begin{align*}
        \F M_v = \{\ol x \in \A_{\K_v}^{ms}: \eta_v(\Phi_{\B j}^{(\rho)})(\ol x) =0 \quad (1 \le \rho \le R, \B j \in J) \}.
    \end{align*}
    Then the methods of Schmidt \cite{Sch:82-4, Sch:82quad} apply.

    \begin{lem}\label{l:si+}
        Suppose that \eqref{eq:K2} is satisfied. We have $\chi_v \gg 1$ whenever $\dim \F M_v \ge ms-Rr$. In particular, this is the case whenever the manifold in question contains a non-singular point. It is always satisfied when $d$ is odd or $\K_v = \C$.
    \end{lem}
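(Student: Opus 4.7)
The plan is to re-interpret $\chi_v$ as a local density at the place $v$, and then to show positivity of that density under the stated hypotheses by the implicit function theorem and classical results of Birch--Schmidt type. First I would establish the density representation
\begin{align*}
    \chi_v = \lim_{\eta \to 0^+} \eta^{-Rr[\K_v:\R]}\,\mu_v \bigl\{\ol x \in \cal B_v^{ms}: |\Phi_{\B j}^{(\rho)}(\ol x)|_v \le \eta \text{ for all }\rho, \B j \bigr\},
\end{align*}
where $\mu_v$ denotes Lebesgue measure on $\K_v^{ms}$ and $\cal B_v$ is the unit box (or polydisc) at $v$. This follows by a standard Fourier-inversion argument replacing the oscillatory factor by an approximate $\delta$-function on $\K_v^{Rr}$; the interchange of limits is licensed by Lemma~\ref{l:si} under the hypothesis \eqref{eq:K2}, which guarantees absolute convergence of the integral defining $\chi_v$.

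Next, I would show that the existence of a non-singular point of $\F M_v$ forces $\chi_v \gg 1$. If $\ol x^* \in \F M_v$ lies in the interior of $\cal B_v^{ms}$ and the Jacobian of $(\Phi_{\B j}^{(\rho)})_{\rho,\B j}$ has full rank $Rr$ at $\ol x^*$, then by the implicit function theorem we may locally parametrise $\F M_v$ by $ms-Rr$ coordinates, and a change of variables shows that the $\eta$-tube $\{|\Phi_{\B j}^{(\rho)}| \le \eta\}$ intersected with a small neighbourhood of $\ol x^*$ has $v$-volume $\asymp \eta^{Rr[\K_v:\R]}$. Inserting this into the density formula yields $\chi_v \gg 1$. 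If $\ol x^*$ lies on the boundary, a small perturbation produces an interior non-singular point, and the same argument applies. Passing to the dimension condition, if $\dim \F M_v \ge ms-Rr$ one can extract from the system $(\Phi_{\B j}^{(\rho)})$ a subsystem of exactly $Rr$ equations cutting out an irreducible component of expected codimension; a generic smooth point on this component is non-singular for the full system, so the dimension hypothesis reduces to the non-singular-point hypothesis.

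It remains to verify the sufficient conditions. When $\K_v = \C$, the variety $\F M_v \subset \A_\C^{ms}$ is cut out by at most $Rr$ equations, so by Krull's height theorem it has complex dimension at least $ms-Rr$, and generic smoothness produces a non-singular point. When $d$ is odd, we appeal to Schmidt's arguments in \cite{Sch:82-4, Sch:82quad}: any real form of odd degree in sufficiently many variables has a non-trivial real zero, and an inductive procedure chooses vectors $\B x_1, \dots, \B x_m$ successively so that $F^{(\rho)}(t_1 \B x_1 + \dots + t_m \B x_m) = 0$ holds identically in $\B t$ and the Jacobian retains full rank at each step. The main obstacle is precisely this last step: producing a non-singular real solution rather than merely a real solution requires careful book-keeping through the inductive construction of Schmidt, since many of the auxiliary equations encountered along the way are of even degree. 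However, the combinatorial and analytic content of Schmidt's argument depends only on the field being formally real, and therefore transfers to the completion $\K_v = \R$ without modification.
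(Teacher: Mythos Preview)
Your overall architecture---Fourier inversion to a local density, then a tube-volume estimate---matches the paper's approach via Schmidt's kernel $w_L$ and the quantities $\F J_L$; these are two phrasings of the same idea, and your use of Lemma~\ref{l:si} to justify the interchange of limits is exactly how the paper's convergence step $\F J_L \to \F J$ works.

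The genuine gap is in your reduction of the dimension hypothesis to the existence of a non-singular point. You write that if $\dim \F M_v \ge ms-Rr$ then one can extract a subsystem of $Rr$ equations and find a generic smooth point which is non-singular for the full system. But the system $(\Phi_{\B j}^{(\rho)})_{\rho,\B j}$ already consists of exactly $Rr$ equations, so there is nothing to extract, and over $\R$ no form of generic smoothness is available: the real zero locus of $x^2$ in $\R^2$ has real dimension $1 = 2-1$ but every point is singular for the defining equation. Thus the implication ``dimension $\Rightarrow$ non-singular point'' fails over $\R$, and your route to the main assertion of the lemma breaks down precisely here.

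The paper circumvents this by not passing through a non-singular point at all. Schmidt's Lemma~2 in \cite{Sch:82quad} is a direct measure-theoretic statement: if the real algebraic set $\F M_v$ has (real) dimension at least $ms-Rr$, then the tube $\{\ol x : |\Phi_{\B j}^{(\rho)}(\ol x)| \le L^{-1}\}$ has volume $\gg L^{-Rr}$ uniformly in $L$, i.e.\ $\F J_L \gg 1$. The paper then uses the implicit function theorem only in the \emph{opposite} direction, to deduce the dimension bound from a non-singular point, and cites Schmidt \cite[\S 2]{Sch:82-4} for the dimension bound (not the existence of a non-singular point) when $d$ is odd. Your argument proves the ``non-singular point $\Rightarrow \chi_v \gg 1$'' part cleanly, but to recover the full statement you must replace the flawed reduction by a direct tube-volume argument in the spirit of Schmidt.
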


    \begin{pf}
        In the case $\K_v = \R$, the first statement is due to Schmidt \cite[Lemma~2 and \S 11]{Sch:82quad} (see also \cite[Chapter 4.5]{B:PhD}), but the proof can be adapted without difficulties to the complex case as well. In order to simplify notation we will suppress the dependence on the embedding $v$. For $L>0$ set
        \begin{align*}
            \hat w_L(x) &=  \max\{0, L(1-L|x|)\} \quad (x \in \R), \\ w_L(z) &= \hat w_L(\Re z) \hat w_L(\Im z) \quad (z \in \C),
        \end{align*}
        and define
        \begin{align*}
            \F J_L = \int_{[-1,1]^{2ms}} \prod_{\rho = 1}^R \prod_{\B j \in J} w_L(\Phi_{\B j}^{(\rho)}(\ol x)) \D  \Re \ol x  \D  \Im \ol x.
        \end{align*}
        The proof of \cite[Lemma~2]{Sch:82quad} (see also \cite[Lemma 4.7]{B:PhD}) can now be adapted in a straightforward manner by interpreting $\C$ as a two-dimensional $\R$-vector space. This shows that under the hypothesis of the statement we have $\F J_L \gg 1$ uniformly in $L$.

        In order to show that $\F J_L \to \F J$ as $L$ tends to infinity, we follow the argument of \cite[\S 11]{Sch:82quad} (see also \cite[Lemma 4.6]{B:PhD}) by considering real and imaginary parts separately. Since
        \begin{align*}
            \hat w_L(x) = \int_{\R}e(\beta x)\left(\frac{\sin(\pi \beta/L)}{\pi \beta/L}\right)^2 \D \beta
        \end{align*}
        and furthermore $\hat w_L(x)=\hat w_L(-x)$, it is easy to show that
        \begin{align*}
            w_L(z) &= \int_{\C}e(\Tr z \beta )\prod_{i=1,2}\left(\frac{\sin(\pi \beta_i/L)}{\pi \beta_i/L}\right)^2\D \beta,
        \end{align*}
        where we set $\beta = \beta_1 + \mathrm i \beta_2$. The argument of \cite[\S 11]{Sch:82quad} can now be adapted easily to show that $\F J - \F J_L \ll L^{-1}$, provided that \eqref{eq:K2} is satisfied. This completes the proof of the first statement of the lemma.

        It thus remains only to comment on the fact that the inequality $\dim \F M_v \ge ms-Rr$ is really satisfied under the stated conditions. If the manifold $\F M_v$ contains a non-singular point, the statement follows from the Implicit Function Theorem, and it is a consequence of basic algebraic geometry if $\K_v=\C$ is algebraically closed (\cite[Chapter I.6, Corollary 1.7]{Sh:BAG1}). Finally, when $\K_v = \R$ and $d$ is odd, the same conclusion has been established by Schmidt \cite[\S 2]{Sch:82-4}.
    \end{pf}

     Theorem~\ref{thm:local} is now immediate upon combining all estimates hitherto obtained. Furthermore, we have the stronger statement that
    \begin{align*}
        N_m(P) = \Pi^{ms-Rrd} \prod_{v \in \Omega(\K)} \chi_v + O(\Pi^{ms-Rrd-\delta}),
    \end{align*}
    where the product over all places of $\K$ converges absolutely, provided the hypotheses of Theorem~\ref{thm:main} are true, and the main term is positive if additionally either $d$ is odd or $\K$ is totally imaginary, and furthermore either of the two conditions 
    \begin{align*}
        ms-\dim \sing_m \B F &\ge d^{2^{d-1}}(R^2d^2+Rm)^{2^{d-2}}
    \end{align*}
    and 
    \begin{align*}
        s - \dim \sing^* \B F > 2^{d-1}(d-1)R d^{2^{d-1}}(R^2d^2+Rm)^{2^{d-2}}
    \end{align*}           
    is satisfied.

\bibliographystyle{amsplain}
\bibliography{fullrefs}

\end{document}